\numberwithin{equation}{section}
\newtheorem{thm}{Theorem}[section]
\newtheorem{cor}[thm]{Corollary}
\newtheorem{propn}[thm]{Proposition}
\newtheorem{defn}[thm]{Definition}
\newtheorem{assumption}[thm]{Assumption}
\newtheorem{algorithm}{Algorithm}[section]
\newtheorem{eg}[thm]{Example}
\newtheorem{remark}[thm]{Remark}
\newcommand{\paren}[1]{\left(#1\right)}
\newcommand{\klam}[1]{\left\{#1\right\}}
\newcommand{\Ccal}{{\mathcal C}}
\newcommand{\Jcal}{{\mathcal J}}
\newcommand{\Ocal}{{\mathcal O}}
\newcommand{\Pcal}{{\mathcal P}}
\renewcommand{\Bbb}{\mathbb{B}}
\newcommand{\Nbb}{\mathbb{N}}
\newcommand{\Rbb}{\mathbb{R}}
\newcommand{\xhat}{{\widehat{x}}}
\newcommand{\bbar}{{\overline{b}}}
\newcommand{\pbar}{{\overline{p}}}
\newcommand{\ubar}{{\overline{u}}}
\newcommand{\vbar}{{\overline{v}}}
\newcommand{\xbar}{{\overline{x}}}
\newcommand{\ybar}{{\overline{y}}}
\newcommand{\zbar}{{\overline{z}}}
\newcommand{\alphabar}{{\overline{\alpha}}}
\newcommand{\epsilonbar}{{\overline{\epsilon}}}
\newcommand{\rhobar}{{\overline{\rho}}}
\newcommand{\Ltilde}{{\widetilde{L}}}
\renewcommand{\equiv}{:=}
\newcommand{\toinf}{\rightarrow\infty}
\newcommand{\Rn}{{\Rbb^n}}
\newcommand{\Rm}{{\Rbb^m}}
\newcommand{\Rp}{{\Rbb_+}}
\newcommand{\Rth}{{\Rbb^3}}
\newcommand{\Rtw}{{\Rbb^2}}
\newcommand{\extre}{(-\infty,+\infty]}
\newcommand{\ucmin}[2]{\underset{#2}{\mbox{minimize}}~#1}
\newcommand{\cmin}[3]{\begin{array}{ll}\underset{#2}{\mbox{minimize }}\qquad #1&\\
                       \mbox{subject to }\qquad#3&\end{array}}
\newcommand{\set}[2]{\left\{#1\,\left|\,#2\right.\right\}}
\newcommand{\map}[3]{#1:\,#2\rightarrow #3\,}
\newcommand{\mmap}[3]{#1:\,#2\rightrightarrows #3\,}
\newcommand{\ip}[2]{\left\langle #1,~ #2\right\rangle}
\newcommand{\norm}[1]{\left\|#1\right\|}
\newcommand{\und}{\quad\mbox{ and }\quad}
\DeclareMathOperator{\sign}{sign}
\DeclareMathOperator{\argmin}{argmin\,}
\DeclareMathOperator{\dist}{dist \;}
\DeclareMathOperator{\lev}{{lev\,}}
\DeclareMathOperator{\dom}{{dom\,}}
\DeclareMathOperator{\gph}{gph}
\DeclareMathOperator{\hull}{co\,}
\DeclareMathOperator{\Fix}{\mathsf{Fix}\,}
\newcommand{\sd}{\partial}
\def\Pr{\ensuremath{{\mathcal P}_\rho}}
\def\Pinf{\ensuremath{{\mathcal P}_\infty}}
\def\Pp{\ensuremath{{\mathcal P}'}}
\def\Dp{\ensuremath{{\mathcal D}'}}
\newcommand{\Id}{\ensuremath{\operatorname{Id}}}
\newtheorem{Theorem}[thm]{Theorem}
\newtheorem{Corollary}[thm]{Corollary}
\newtheorem{Lemma}[thm]{Lemma}
\def\bc{\begin{center}}
\def\ec{\end{center}}
\def\ni{\noindent}
\def\s2c{\vskip 2cm}
\def\bt{\begin{Theorem}}
\def\et{\end{Theorem}}
\def\bl{\begin{Lemma}}
\def\el{\end{Lemma}}
\def\bcor{\begin{Corollary}}
\def\ecor{\end{Corollary}}
\def\beqn{\begin{equation}}
\def\eeqn{\end{equation}}
\begin{document}
\title{{Local Linear Convergence of the ADMM/Douglas--Rachford Algorithms without Strong Convexity
and Application to Statistical Imaging\footnote{To appear, SIAM Journal on Imaging Sciences}}}
\author{
Timo Aspelmeier\thanks{Institute for Mathematical Stochastics and 
Felix Bernstein Institute for Mathematical Statistics 
in the Biosciences, 
Georg-August Universit\"at G\"ottingen, 37077 G\"ottingen, Germany
}, ~
C. Charitha\thanks{
Institut f\"ur Numerische und Angewandte Mathematik,
Universit\"at G\"ottingen,
37083 G\"ottingen, Germany
}, ~
% J. Fadili\thanks{
% GREYC, CNRS-ENSICAEN-Universit\'e de Caen,
% 14050 Caen Cedex, France
% } ~
and D. Russell Luke\thanks{
Institut f\"ur Numerische und Angewandte Mathematik,
Universit\"at G\"ottingen,
37083 G\"ottingen, Germany
}
}
\date{\ttfamily version \today}
\maketitle
\begin{abstract}
We consider the problem of minimizing the sum of a  convex function and 
a convex function composed with an injective linear mapping.  
For such problems, subject to a coercivity condition at fixed points of the 
corresponding  Picard iteration, iterates of the alternating directions method of multipliers converge 
locally linearly to points from 
which the solution to the original problem can be computed.  Our proof strategy uses duality and 
strong metric subregularity of the Douglas--Rachford fixed point mapping.  
Our analysis does not require strong convexity and yields error bounds to the set of model solutions.  
We show in particular that convex piecewise linear-quadratic functions naturally satisfy the 
requirements of the theory, guaranteeing eventual linear convergence of both the Douglas--Rachford algorithm
and the alternating directions method of multipliers for this class of objectives under mild assumptions
on the set of fixed points.  
We demonstrate this result on quantitative image deconvolution and denoising with multiresolution statistical constraints. 
\end{abstract}

{\small \noindent 
Research of T. Aspelmeier, C. Charitha and D. R. Luke  was supported in part by the German Research Foundation grant SFB755-A4. \\
}

{\small \noindent {\bfseries 2010 Mathematics Subject
Classification:} {Primary 49J52, 49M20, 90C26;
Secondary 15A29, 47H09, 65K05, 65K10, 94A08. 
}

\noindent {\bfseries Keywords:}
Augmented Lagrangian, ADMM, Douglas--Rachford, exact penalization, fixed point theory, image processing, inverse problems, metric regularity, 
statistical multiscale analysis, piecewise linear-quadratic, linear convergence 
}

\section{Introduction.}
The alternating directions method of multipliers (ADMM) has received a great deal of attention recently for 
large-scale problems involving constraints on the image of the unknowns under some linear mapping.  
The analysis has focused on either global complexity estimates \cite{HeYuan12} or 
sufficient conditions for local linear convergence \cite{Boley13, NishiharaLessardRechtPackardJordan15, EcksteinYao15}.   
The closely related Douglas--Rachford algorithm has also been the focus of 
recent studies showing global complexity \cite{PatrinosStellaBemporad14, LiPong15} and (local linear) convergence 
in increasingly inhospitable settings
\cite{HesseLuke13, HesseLukeNeumann14, Phan15, BauschkeNoll14, BauschkeBelloCruzNghiaPhanWang14, 
BauschkeNollPhan15,AragonBorweinTam13,BorweinTam15}.   A survey of results on  
proximal methods in general can be found in \cite{ParikhBoyd14}. 
In the convex setting, the convergence studies for 
both ADMM and Douglas--Rachford share a common thread through the well-known duality between these 
algorithms \cite{Gabay83}.  Studies of ADMM frequently invoke strong convexity.  Studies of Douglas--Rachford, on the other 
hand have, until very recently, focused on {\em feasibility} problems and corresponding notions of regularity of 
intersections.  
We combine an analysis of the ADMM algorithm with facts learned from the local 
convergence of Douglas--Rachford to provide sufficient conditions for local linear convergence of sequences generated 
by ADMM without strong convexity.   While this paper was under review we became aware of two recent studies 
that also combine the analysis of ADMM and Douglas-Rachford to improve and generalize many local and global results
\cite{Giselsson15, Giselsson15b}.  
While our theoretical results are general and abstract, our motivation for the current study comes 
from the application of 
statistical multiscale image denoising/deconvolution following \cite{FrickMarnitzMunk12, FrickMarnitzMunk13} 
for fluorescence microscopic images 
(see also \cite{Aspelmeier2015}  for a review of fluorescence microscopy techniques and statistical methods for them). 
We demonstrate the analysis for image denoising/deconvolution of Stimulated Emission Depletion (STED) images 
\cite{Hell1994, Hell2000}.

\subsection{Notation and definitions}
Though many of the arguments presented here work equally well for infinite dimensional Hilbert spaces, 
to avoid technicalities, {\em it will be assumed throughout that $U$ and $V$ are Euclidean spaces.}
The norm $\|\cdot\|$ denotes the Euclidean norm.  
We denote the extended reals by 
$\extre\equiv \Rbb\cup\{+\infty\}$ and the nonnegative orthant by $\Rp\equiv\set{x\in\Rbb}{x\geq 0}$.   
The closed unit ball centered at the origin is denoted by $\Bbb$.  In the usual notation 
for the natural numbers $\Nbb$ we include $0$.  
The mapping $A:U \rightarrow V$ is linear and the functional 
$J: U \rightarrow \extre$ is proper (not everywhere $+\infty$ and nowhere $-\infty$), 
convex and lower semicontinuous (lsc), as is the functional $H:V\rightarrow \extre$. 
The level set of $J$ corresponding to $\alpha\in \Rbb$ is 
defined by $lev_{\leq\alpha}J:=\{u \in U: J(u)\leq \alpha\}$. The domain of a function 
$\map{f}{U}{\extre}$ is defined by $\dom f= \{u\in U: f(u)<\infty$\}.
We use the notation $\mmap{\Phi}{U}{V}$ to denote a set-valued mapping from $U$ to $V$. 

A proper function $\map{f}{U}{\extre}$ is {\em strongly convex} if there is a constant $\mu>0$ 
such that
\begin{equation}\label{e:strong cvx}
  f\left((1-\tau)x_0 +\tau x_1\right)\leq (1-\tau)f(x_0) + \tau f(x_1) - \tfrac12\mu\tau(1-\tau)\|x_0-x_1\|^2,
\end{equation}
for all $x_0$ and $x_1$ and $\tau \in (0,1)$.  
We will not assume smoothness of functions 
and so will require the {\em subdifferential}. 
The 
subdifferential of a function $\map{f}{U}{\extre}$ at a point $\xbar\in\dom f$ is defined by 
\begin{equation}\label{e:sd}
   \sd f(\xbar)\equiv\set{v\in U}{\left\langle v, x-\xbar\right\rangle\leq f(x)-f(\xbar), \mbox{ for all }x\in U}.
\end{equation}
 When $\xbar\notin \dom f$ the subdifferential is defined to be empty.  Elements from the subdifferential 
are called {\em subgradients}.  The subdifferential of a proper, lsc convex function is a 
{\em maximally monotone} set-valued mapping \cite[Theorem 12.17]{VA}.    
The {\em Fenchel conjugate} of a function $f$ is denoted by $f^*$ and defined by 
\[
f^*(y)\equiv\sup_{x\in U}\klam{\ip{y}{x}-f(x)}.  
\]
A mapping $\mmap{\Phi}{V}{V}$ is said to be {\em $\beta$-inverse strongly monotone} 
\cite[Corollary 12.55]{ VA} if for all $x, x' \in V$ 
 \begin{equation}\label{e:beta-inverse str mon}
\langle v-v', x-x'\rangle \geq \beta \|v-v'\|^2, \quad \mbox{whenever}\quad v\in \Phi(x), v'\in \Phi(x').
 \end{equation}
 The mapping $\Phi$ is said to be {\em polyhedral} (or piecewise polyhedral \cite{VA}) 
 if its graph is the union of finitely many sets that 
 are polyhedral convex in $U\times V$ \cite{DontchevRockafellar09}.
We denote the {\em resolvent} of $\Phi$ by  
$\Jcal_{\Phi}\equiv \left(\Id+\Phi\right)^{-1}$ where $\Id$ denotes the identity mapping and 
the inverse is defined as 
\begin{equation}\label{e:inv set}
   \Phi^{-1}(y)\equiv \set{x\in U}{y\in \Phi(x)}.
\end{equation}
The corresponding {\em reflector} is defined by  $R_{\eta \Phi}\equiv 2\Jcal_{\eta \Phi}-\Id$. 

Notions of {\em continuity} of set-valued mappings have been thoroughly developed over the last 
$40$ years.  Readers are referred to the monographs 
\cite{AubinFrankowska90, VA, DontchevRockafellar09} for basic results.
A mapping $\mmap{\Phi}{U}{V}$ is said to be {\em Lipschitz continuous} if it is closed
valued and for all $u, u' \in U$ 
there exists a $\tau\geq 0$ such that
 \begin{equation}
  \Phi(u')\subset \Phi (u) + \tau \|u'-u\| \Bbb. 
 \end{equation}
 Lipschitz continuity is, however, too strong a notion for set-valued mappings. 
A key property of set-valued mappings that we will rely on is {\em metric subregularity}, which 
can be understood as the property corresponding to a Lipschitz-like continuity of the inverse mapping
relative to a specific point.  As the name suggests, it is a weaker property than {\em metric regularity} 
which, in the case of an $n\times m$ matrix for instance, is equivalent to surjectivity. 
% For linear maps, for instance,  modulus of metric
% regularity of the mapping is the reciprocal of its smallest singular value \cite{BorweinLewis}
Our definition follows the characterization of this property given in 
\cite[Exercise 3H.4]{DontchevRockafellar09}.
\begin{defn}[(strong) metric subregularity]\label{d:metric subregularity}
$~$
\begin{enumerate}[(i)]
 \item\label{d:metric subregularity i} The mapping $\Phi:U\rightrightarrows V$ is called 
\emph{metrically subregular at $\overline x$ for $\overline y$ relative to $W\subset U$} 
if $(\overline x,\overline y)\in \gph\Phi$ and there is a constant $c>0$ and neighborhoods 
$\Ocal$ of $\overline x$ such that
\begin{equation}
\dist(x,\Phi^{-1}(\overline y)\cap W)\leq c\dist(\overline y, \Phi(x))~ \forall~ x\in \Ocal\cap W.
\label{eq:metricsubregularity}
\end{equation}
\item\label{d:metric subregularity ii} The mapping $\Phi$ is called \emph{strongly metrically subregular at 
$\overline x$ for $\overline y$ relative to $W\subset U$} 
if $(\overline x,\overline y)\in \gph\Phi$ and there is a constant $c>0$ and neighborhoods 
$\Ocal$ of $\overline x$ such that
\begin{equation}
\|x-\xbar\|\leq c\dist(\overline y, \Phi(x))~ \forall~ x\in \Ocal\cap W.
\label{eq:strong metricsubregularity}
\end{equation}
\end{enumerate}
\end{defn}
\noindent The constant $c$ measures the stability under perturbations of inclusion $\ybar\in \Phi(\xbar)$. 

An important instance where metric subregularity comes for free is for polyhedral mappings.  
\begin{propn}[polyhedrality implies strong metric subregularity]\label{t:polyhedrality-strong msr}
Let $W\subset V$ be an affine subspace and $\mmap{T}{W}{W}$.  If $T$ is polyhedral and 
$\Fix T\cap W$ is an isolated point, $\{\xbar\}$, 
then $\mmap{\Id-T}{W}{(W-\xbar)}$ is strongly metrically 
subregular, hence metrically subregular, at $\xbar$ for $0$ relative to $W$.     
\end{propn}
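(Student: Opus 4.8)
The plan is to reduce the statement to \emph{ordinary} metric subregularity of the polyhedral mapping $\Id-T$, using the observation that strong metric subregularity is merely metric subregularity when the solution set in question is a single point. First I would identify that solution set: since $0\in(\Id-T)(x)$ is equivalent to $x\in T(x)$, we have $(\Id-T)^{-1}(0)\cap W=\Fix T\cap W=\{\xbar\}$. Consequently the metric subregularity estimate \eqref{eq:metricsubregularity} for $\Id-T$ at $\xbar$ for $0$ relative to $W$ reads $\dist(x,\{\xbar\})=\|x-\xbar\|\le c\,\dist(0,(\Id-T)(x))$ on $\Ocal\cap W$, which is exactly the strong metric subregularity estimate \eqref{eq:strong metricsubregularity}. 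Thus it suffices to prove that $\Id-T$ is metrically subregular at $\xbar$ for $0$ relative to $W$. I would also record that $\Id-T$ is polyhedral: its graph is the image of $\gph T$ under the linear bijection $(x,y)\mapsto(x,x-y)$, and linear images of a finite union of polyhedral convex sets are again such a union.

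The essential content is therefore the classical fact, due to Robinson, that a polyhedral multifunction is metrically subregular at every point of its graph; I would establish it directly for $\Phi:=\Id-T$ via Hoffman's error bound. Write $\gph\Phi=\bigcup_{i=1}^{N}G_i$ with each $G_i\subset W\times(W-\xbar)$ polyhedral convex, let $\Phi_i$ denote the mapping whose graph is $G_i$, and split the indices into the active set $I=\{i:(\xbar,0)\in G_i\}$ and its complement $I'$.

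For $i\in I'$, closedness gives a gap $\delta_i:=\dist((\xbar,0),G_i)>0$, so for $x\in\xbar+\rho\Bbb$ every $v$ with $(x,v)\in G_i$ obeys $\|v\|\ge\delta_i-\rho$; shrinking $\rho$ renders the inactive pieces harmless. For $i\in I$ the slice $G_i\cap(W\times\{0\})$ is nonempty (it contains $(\xbar,0)$), so Hoffman's lemma applied to this consistent polyhedral system bounds the distance from any $(x,v)\in G_i$ to the slice by a Hoffman constant $c_i$ times the residual $\|v\|$. Choosing $v$ to realize $\dist(0,\Phi_i(x))$ produces a point $(x',0)\in G_i$ with $\|x-x'\|\le c_i\,\dist(0,\Phi_i(x))$; since $x'$ lies in $(\Id-T)^{-1}(0)\cap W=\{\xbar\}$ we get $x'=\xbar$ and hence $\|x-\xbar\|\le c_i\,\dist(0,\Phi_i(x))$.

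Finally I would assemble the pieces. Writing $\dist(0,\Phi(x))=\min_i\dist(0,\Phi_i(x))$ and letting $i^*$ attain the minimum: if $i^*\in I$ then $\|x-\xbar\|\le(\max_{i\in I}c_i)\,\dist(0,\Phi(x))$, while if $i^*\in I'$ then $\dist(0,\Phi(x))\ge\delta_0-\rho$ (with $\delta_0=\min_{i\in I'}\delta_i$) against $\|x-\xbar\|\le\rho$, giving $\|x-\xbar\|\le\tfrac{\rho}{\delta_0-\rho}\,\dist(0,\Phi(x))$. Taking $\Ocal=\xbar+\rho\Bbb$ with $0<\rho<\delta_0$ and $c=\max\{\max_{i\in I}c_i,\ \rho/(\delta_0-\rho)\}$ finishes the proof. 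I expect the main obstacle to lie not in any single estimate --- the one-polyhedron bound is a routine application of Hoffman's lemma --- but in the bookkeeping of the finite-union structure: securing one constant that serves all active pieces simultaneously while dominating the (bounded-below) contribution of the inactive pieces.
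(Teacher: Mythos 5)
Your proof is correct, but it follows a genuinely different route from the paper's. The paper's proof is essentially a citation: it observes that polyhedrality of $T$ passes to $\Phi^{-1}=(\Id-T)^{-1}$ and then invokes \cite[Propositions 3I.1 and 3I.2]{DontchevRockafellar09}, which package exactly the two ingredients you establish by hand --- that strong metric subregularity at $\xbar$ for $0$ amounts to ordinary metric subregularity once the solution set relative to $W$ reduces to the single point $\xbar$, and that polyhedral multifunctions satisfy such an estimate automatically (Robinson's theorem on the outer Lipschitz/calmness behaviour of polyhedral mappings). Your argument reproves that black box from scratch, specialized to $\Phi=\Id-T$ at $(\xbar,0)$: the splitting of $\gph\Phi$ into convex polyhedral pieces, Hoffman's error bound on the active pieces, and the uniform distance gap on the inactive pieces is precisely the classical mechanism underlying Robinson's theorem. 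The paper's approach buys brevity and rests on a standard reference; yours buys self-containedness and transparency --- the modulus of subregularity is exhibited explicitly as $c=\max\klam{\max_{i\in I}c_i,\ \rho/(\delta_0-\rho)}$ on $\Ocal=\xbar+\rho\Bbb$ with $0<\rho<\delta_0$, and one sees exactly where each hypothesis enters (closedness of the pieces for $\delta_0>0$, convexity for Hoffman, the singleton fixed-point set to identify the projected point $x'$ with $\xbar$).

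One caveat worth recording. Your identification $x'=\xbar$ uses the reading $\Fix T\cap W=\{\xbar\}$, i.e., that the fixed-point set relative to $W$ is literally a singleton. This is consistent with how the paper itself uses the hypothesis (compare the proof of Theorem \ref{t:gen convergence}). If, however, one reads the hypothesis more weakly, as ``$\xbar$ is an isolated point of $\Fix T\cap W$'' (other fixed points permitted in $W$, but away from $\xbar$), then your step needs a one-line patch: the point $(x',0)$ produced by Hoffman's lemma lies in the slice $G_i\cap(W\times\{0\})$, which is a convex set of fixed points containing $\xbar$, and convexity together with isolatedness of $\xbar$ forces this slice to equal $\{\xbar\}$. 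With that patch the estimate on the active pieces, and hence the whole proof, survives under the weaker reading as well.
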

\begin{proof}
If $T$ is polyhedral, so is 
$\Phi^{-1}\equiv (\Id-T)^{-1}$.  Now by 
\cite[Propositions  3I.1 and 3I.2]{DontchevRockafellar09}, since 
$\Phi^{-1}$ is polyhedral and $\xbar$ is an isolated point of $\Phi^{-1}(0)\cap W$,
then $\Phi= \Id - T$ is {\em strongly metrically subregular}  at $\xbar$ for $0$ with constant $c$ on 
the neighborhood $\Ocal$ of $\xbar$ restricted to 
$W$ \eqref{eq:strong metricsubregularity}.  
\end{proof}

\noindent One prevalent source of polyhedral mappings is the subdifferential of piecewise linear-quadratic functions (see 
Proposition \ref{t:polyhedral subdiff} below).
\begin{defn}[piecewise linear-quadratic functions]\label{d:plq}
 A function $f:\Rn \rightarrow [-\infty, +\infty]$ is called \emph{piecewise linear-quadratic} 
 if $dom f$ can be represented as the union
of finitely many polyhedral sets, relative to each of which $f(x)$ is given by an expression of the form 
$\frac{1}{2}\langle x, Ax\rangle +\langle a, x \rangle+\alpha $ for some scalar $\alpha\in \mathbb{R}$ vector
$a\in \Rn$, and symmetric matrix $A\in \mathbb{R}^{n\times n} $. 
\end{defn}

A notion related to metric regularity is that of {\em weak-sharp solutions}.  This will be used in the development 
of error bounds (Theorem \ref{t:dist to Sinf}).  
\begin{defn}[weak sharp minimum \cite{BurkeFerris93}]\label{d:weak sharp} 
The solution set $\argmin\set{f(x)}{x\in\Omega}$ for a nonempty closed convex set 
$\Omega$, is weakly sharp if, for $\pbar=\inf_\Omega f$, there exists a positive number $\alpha$ (sharpness constant) such that 
\begin{equation*}
 f(x)\geq \pbar+\alpha~\dist(x, S_f)~~~\forall x\in \Omega.
\end{equation*}
Similarly, the solution set $S_f$ is weakly sharp 
of order $\nu>0$ if there exists a positive number $\alpha$ (sharpness constant) such that, 
for each $x\in\Omega$, 
\begin{equation*}
 f(x)\geq \pbar+\alpha~\dist(x, S_f)^\nu~~~\forall x\in \Omega.
\end{equation*} 
\end{defn}

\subsection{Preparatory abstract results}\label{s:abstract}
To conclude this section we present general results about types of (firmly) nonexpansive operators
that clarify the underlying mechanisms yielding linear convergence of many algorithms.   
The operative definitions are given here. 
\begin{defn}[$(S,\epsilon)$-(firmly-)nonexpansive mappings]\label{d:q(f)ne}
Let $D$ and $S$ be nonempty subsets of $U$ and let $T$ be a (multi-\-valued) mapping from $D$ to $U$.
\begin{enumerate}[(i)]
   \item\label{d:qne}  $T$ is called \emph{$(S,\varepsilon)$-nonexpansive on $D$} if 
\begin{eqnarray}\label{eq:epsqnonexp}
&&\norm{x_+-\xbar_+}\leq\sqrt{1+\varepsilon}\norm{x-\xbar},\\
&&\forall x\in D,~\forall \xbar\in S,~\forall x_+\in Tx,~\forall \xbar_+\in T\xbar. \nonumber
\end{eqnarray}
If \eqref{eq:epsqnonexp} holds with $\epsilon=0$ then we say that $T$ is \emph{$S$-nonexpansive} on $D$.
\item\label{d:qfne} $T$ is called \emph{$(S,\varepsilon)$-firmly nonexpansive} on $D$ if  
\begin{eqnarray}\label{eq:quasifirm}
&&\norm{x_+-\xbar_+}^2+\norm{(x -x_+)-(\xbar -\xbar_+)}^2\leq(1+\varepsilon)\norm{x-\xbar}^2,\\
&&\forall x\in D,~\forall \xbar\in S,~\forall x_+\in Tx,~\forall \xbar_+\in T\xbar. \nonumber
\end{eqnarray}
If \eqref{eq:quasifirm} holds with $\epsilon=0$ then we say that $T$ is \emph{$S$-firmly nonexpansive} on $D$.
If, in addition,  $S=\Fix T$, then $T$ is said to be {\em quasi-firmly nonexpansive}. 
\end{enumerate}
\end{defn}

% 
% \section{An Abstract Result}\label{s:abstract}

\begin{thm}[abstract linear convergence result]\label{t:gen convergence}
Let $W\subset V$ be an affine subspace and $\mmap{T}{W}{W}$ be quasi-firmly nonexpansive on $W$. 
Let $\Fix T\cap W$ be an isolated point, $\{\xbar\}$.  
If  $\mmap{\Id-T}{W}{(W-\xbar)}$ is metrically subregular at $\xbar$ for $0$, then there
is a neighborhood $\Ocal$ of $\xbar$  such that
\begin{equation}\label{e:gen convergence}
 \dist(x_+, \Fix T)\leq \sqrt{1-\kappa}\dist(x, \Fix T),\quad\forall x_+\in Tx, ~\forall x\in \Ocal\cap W,
\end{equation}
where $0<\kappa=c^{-2}$ for $c$ a constant of metric subregularity of $\Id-T$ at $\xbar$ for the 
neighborhood $\Ocal$.   
Consequently, the fixed point iteration $x^{k+1}=Tx^k$ converges linearly to $\Fix T$ with rate 
$\sqrt{1-\kappa}$ for all $x^0\in\Ocal\cap W$. 
\end{thm}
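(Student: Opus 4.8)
The plan is to play the two hypotheses against each other: quasi-firm nonexpansiveness bounds the new distance $\|x_+-\xbar\|$ from above in terms of the old distance $\|x-\xbar\|$ and the step length $\|x-x_+\|$, while metric subregularity bounds that same step length from below in terms of $\|x-\xbar\|$. Feeding the lower bound into the upper bound produces the contraction factor $\sqrt{1-\kappa}$.

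First I would exploit that $\xbar\in\Fix T$, so that $\xbar\in T\xbar$ and we may take $\xbar_+=\xbar$ in the firm-nonexpansiveness inequality \eqref{eq:quasifirm} (with $\varepsilon=0$). This gives, for every $x\in W$ and every $x_+\in Tx$,
\[
\|x_+-\xbar\|^2+\|x-x_+\|^2\leq\|x-\xbar\|^2.
\]
Since $T$ maps $W$ into $W$, every fixed point lies in $W$, so $\Fix T=\Fix T\cap W=\{\xbar\}$ and the distances appearing in \eqref{e:gen convergence} are simply $\dist(x,\Fix T)=\|x-\xbar\|$ and $\dist(x_+,\Fix T)=\|x_+-\xbar\|$; the whole argument thus reduces to scalar estimates on these two norms.

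Next I would invoke metric subregularity. Because $(\Id-T)^{-1}(0)\cap W=\Fix T\cap W=\{\xbar\}$, the distance on the left of \eqref{eq:metricsubregularity} collapses to $\|x-\xbar\|$, and the hypothesis reads $\|x-\xbar\|\leq c\,\dist\paren{0,(\Id-T)(x)}$ for all $x\in\Ocal\cap W$. The key observation is that the particular residual satisfies $x-x_+\in(\Id-T)(x)$, whence $\|x-x_+\|\geq\dist(0,(\Id-T)(x))\geq c^{-1}\|x-\xbar\|$. Setting $\kappa=c^{-2}$ and substituting into the firm-nonexpansiveness estimate yields
\[
\|x_+-\xbar\|^2\leq\|x-\xbar\|^2-\|x-x_+\|^2\leq(1-\kappa)\|x-\xbar\|^2,
\]
which is exactly \eqref{e:gen convergence} after taking square roots. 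I would also note that the firm-nonexpansiveness bound forces $\|x-x_+\|\leq\|x-\xbar\|$, so that $c\geq1$ and hence $0<\kappa\leq1$, making the rate $\sqrt{1-\kappa}\in[0,1)$ a genuine contraction factor.

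Finally, for the iteration statement I would shrink $\Ocal$ to a ball $B(\xbar,\delta)\subset\Ocal$ and argue by induction that the iterates cannot escape: if $x^k\in B(\xbar,\delta)\cap W$, then $\|x^{k+1}-\xbar\|\leq\sqrt{1-\kappa}\,\|x^k-\xbar\|\leq\delta$, so $x^{k+1}$ again lies in $B(\xbar,\delta)\cap W$ and the one-step estimate reapplies. Composing the per-step contractions gives $\|x^k-\xbar\|\leq(1-\kappa)^{k/2}\|x^0-\xbar\|\to0$, i.e.\ linear convergence to $\xbar$ at rate $\sqrt{1-\kappa}$. The only point demanding care---more bookkeeping than genuine obstacle---is the direction of the subregularity inequality in the middle step: one must transfer the lower bound from the best possible step $\dist(0,(\Id-T)(x))$ to the \emph{actual} residual $x-x_+$, which is legitimate precisely because $x-x_+$ is an element of $(\Id-T)(x)$ and is therefore no shorter than the distance to $0$. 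Trapping the iterates inside the neighborhood is then automatic from the Fej\'er-type monotonicity already built into the single-step contraction.
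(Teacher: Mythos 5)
Your proof is correct and follows essentially the same route as the paper: both arguments reduce metric subregularity to the coercivity bound $\|x-x_+\|\geq c^{-1}\|x-\xbar\|$ (using that $x-x_+\in(\Id-T)(x)$ and that $(\Id-T)^{-1}(0)\cap W=\{\xbar\}$) and then combine it with quasi-firm nonexpansiveness taken at the fixed point. The only difference is that the paper delegates this final combination, together with the argument trapping the iterates in the neighborhood, to the cited Lemma 3.1 of \cite{HesseLuke13}, whereas you prove that step inline.
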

\begin{proof}
Define $\Phi\equiv (\Id-T)$ and note that $\{\xbar\}=(\Id-T)^{-1}(0)\iff \{\xbar\}=\Fix T$, 
hence 
\[
\dist(x, (\Id-T)^{-1}(0)) = \dist(x, \Fix T) = \|x-\xbar\|. 
\]
Suppose that $\Phi$ is metrically subregular at $\Fix T$ for $0$.  Then by 
Definition \ref{d:metric subregularity}\eqref{d:metric subregularity i} we have, 
for all $x\in \Ocal\cap W$ and for all $x^+\in T(x)$,
\begin{eqnarray}\label{e:abs coercive}
\dist(x, (\Id-T)^{-1}(0)) =  \|x-\xbar\| &\leq& c \dist\left(0, (x-Tx)\right) \leq c \|x-x^+\|,
\end{eqnarray}
which is the coercivity condition of \cite[Eq.(3.1), Lemma 3.1]{HesseLuke13}.
By assumption, $T$ is 
$(\Fix T,0)$-firmly nonexpansive (i.e., quasi-firmly nonexpansive) on $W$ 
(Definition \ref{d:q(f)ne} \eqref{d:qfne}).  The result then follows from 
\cite[Lemma 3.1]{HesseLuke13} with rate $\sqrt{1-\kappa}$ for $\kappa=c^{-2}$. 
\end{proof}
\begin{remark}[on $\kappa$]
   The constant $\kappa$ in the above theorem can always be chosen to be less than or equal to $1$.
To see this, note that for any metrically subregular mapping $\Phi$, there is a constant $c\geq 1$ and hence a 
$\kappa\leq 1$ so that the rate constant given in Theorem \ref{t:gen convergence} will always hold whenever the 
fixed point is a 
(relatively) isolated point. 
\end{remark}
\begin{eg}[a simple example]\label{eg:example}
Consider two lines, $A$ and $B$, in $\Rtw$ 
intersecting orthogonally at the origin and let $T$ be the Douglas--Rachford operator for the projections onto each line. 
In this example $T=\tfrac12\paren{R_AR_B+\Id}$ where $R_A\equiv 2P_A-\Id$ for the projection 
onto the line $A$ denoted by $P_A$, and likewise for $R_B$.  In the context of what follows, $P_A$ is the resolvent of 
the subdifferential of the indicator function of the line $A$ and likewise for $P_B$.  It is elementary to verify 
that $T$ is firmly nonexpansive, has a unique fixed point, and $T(x)=0$ for all $x$. Moreover  
$\Phi=\Id-T=\Id$, which has a constant of metric subregularity $c=1$.  Theorem \ref{t:gen convergence} then predicts that 
the Douglas--Rachford algorithm converges linearly with rate constant $0$ in this case, i.e. it converges in one step. 
The reader can verify that this indeed is the case.  

To see the importance of the restriction to the affine subspace $W$, consider instead of  two lines in 
$\Rtw$ two lines in $\Rth$ intersecting at the origin.  It can be shown that 
 the fixed points of the Douglas--Rachford operator consist of the axis -- let's call 
 it the $z$ axis -- extending 
 from the origin, perpendicular to the linear hull of the two lines \cite{BauschkeCombettesLuke04}.
 It is elementary to verify that, from any starting point $x^0$ in $\Rth$, the  
 Douglas--Rachford algorithm converges in one step to the intersection 
 of the $z$ axis with the affine subspace containing $x^0$ and parallel to the 
 plane containing the lines $A$ and $B$.    Clearly, the fixed points of the mapping 
 $T$ are not isolated points, but they are isolated points relative to the affine subspace
 containing $x^0$ and parallel to $A$ and $B$, so Theorem \ref{t:gen convergence} applies
 and predicts, correctly, that the Douglas-Rachford algorithm converges to a fixed point in one step.  
 The projection of this fixed point onto the set $B$ is the solution to the problem of finding the 
 point of intersection. 
\end{eg}

\noindent 

\begin{cor}[Polyhedrality implies linear convergence]\label{t:polyhedral convergence}
 Let $W\subset V$ be an affine subspace and $\mmap{T}{W}{W}$ be quasi-firmly nonexpansive on $W$. 
Let $\Fix T\cap W$ be an isolated point, $\{\xbar\}$.  
If  $T$ is polyhedral, then there
is a neighborhood $\Ocal$ of $\xbar$ such that
\[
\dist(x_+, \Fix T)\leq \sqrt{1-\kappa}\dist(x, \Fix T)\quad\forall x_+\in Tx, ~\forall x\in \Ocal\cap W,   
\]
where $0<\kappa=c^{-2}$ for $c$ a constant of metric subregularity of $\Id-T$ at $\xbar$ for the 
neighborhood $\Ocal\cap W$.   
Consequently, the fixed point iteration $x^{k+1}=Tx^k$ converges linearly to $\Fix T$ with rate 
$\sqrt{1-\kappa}$ for all $x^0\in\Ocal\cap W$. 
\end{cor}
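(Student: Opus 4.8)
The plan is to obtain this corollary immediately by chaining the two preceding results, namely Proposition \ref{t:polyhedrality-strong msr} and Theorem \ref{t:gen convergence}. The starting point is to compare the hypotheses of the corollary with those of Theorem \ref{t:gen convergence}. Both require that $\mmap{T}{W}{W}$ be quasi-firmly nonexpansive on $W$ and that $\Fix T\cap W$ be an isolated point $\{\xbar\}$; these are assumed outright here. The one remaining ingredient demanded by Theorem \ref{t:gen convergence} is that $\mmap{\Id-T}{W}{(W-\xbar)}$ be metrically subregular at $\xbar$ for $0$. The entire content of the proof is therefore to supply this single missing hypothesis, and the polyhedrality assumption is exactly the tool designed to do so.

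First I would invoke Proposition \ref{t:polyhedrality-strong msr}. Its hypotheses are precisely that $W$ is an affine subspace, that $\mmap{T}{W}{W}$ is polyhedral, and that $\Fix T\cap W$ is an isolated point $\{\xbar\}$ -- all of which hold by assumption in the corollary. The proposition then yields that $\mmap{\Id-T}{W}{(W-\xbar)}$ is \emph{strongly} metrically subregular, and hence (by Definition \ref{d:metric subregularity}, since \eqref{eq:strong metricsubregularity} trivially implies \eqref{eq:metricsubregularity}) metrically subregular, at $\xbar$ for $0$ relative to $W$, with some constant $c>0$ on a neighborhood $\Ocal$ of $\xbar$ restricted to $W$.

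With metric subregularity now in hand, every hypothesis of Theorem \ref{t:gen convergence} is satisfied, so I would apply it directly to conclude that there is a neighborhood $\Ocal$ of $\xbar$ on which the contraction estimate
\[
\dist(x_+, \Fix T)\leq \sqrt{1-\kappa}\,\dist(x, \Fix T),\quad\forall x_+\in Tx,~\forall x\in\Ocal\cap W,
\]
holds with $\kappa=c^{-2}$, which in turn gives the linear convergence of the iteration $x^{k+1}=Tx^k$. The only point requiring a line of care is bookkeeping of constants and neighborhoods: the constant $c$ of metric subregularity produced by Proposition \ref{t:polyhedrality-strong msr} is exactly the $c$ that Theorem \ref{t:gen convergence} uses to form the rate $\kappa=c^{-2}$, and one takes $\Ocal$ to be the intersection of the neighborhoods furnished by the two results (shrinking if necessary), so that both the subregularity inequality and the nonexpansiveness hold simultaneously there. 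There is no genuine analytic obstacle in this corollary -- the substance was already expended in establishing the proposition and the theorem -- so the ``hard part'' is merely verifying that the notions of metric subregularity (Definition \ref{d:metric subregularity}) and the constant conventions align verbatim across the two cited statements, which they do by construction.
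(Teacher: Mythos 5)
Your proof is correct and is exactly the paper's argument: the paper's own proof is the one-line observation that the result follows immediately from Proposition \ref{t:polyhedrality-strong msr} and Theorem \ref{t:gen convergence}. Your additional bookkeeping about the constant $c$ and the intersection of neighborhoods is a faithful (and slightly more explicit) rendering of the same chaining.
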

\begin{proof}
The result follows immediately from Proposition \ref{t:polyhedrality-strong msr} and Theorem \ref{t:gen convergence}.
\end{proof}

\noindent The requirement that the fixed point set is a singleton can be viewed as a uniqueness assumption, which 
is common in the inverse problems literature.  
It is well known, however, that, even if the solution to a given problem is unique, the 
set of fixed points of the numerical method (of interest to us, the Douglas--Rachford 
operator)  need not be solutions to the given problem, much less be unique
\cite{LionsMercier79, BauschkeCombettesLuke04}.  Recent work  has shown, however, 
that the set of fixed points need only consist of singletons relative to 
appropriate affine subspaces where the iterates lie \cite{Phan15, HesseLukeNeumann14}.  This feature has been
exploited in the analysis of the Douglas--Rachford algorithm applied to problems with polyhedral and quadratic structure
\cite{LiangPeyreFadiliLuke}.  
% Under even stronger assumptions on the regularity of the solution set 
% (nonempty interior of the intersection for affine polyhedral feasibility) an extension of \cite{BauschkeDaoNollPhan15} 
% yields finite convergence.  
Metric (sub)regularity, on the other hand,  is one of the central assumptions of well-posedness of inverse 
problems \cite{KlatteKummer09, DontchevRockafellar09}.  
Other useful equivalent characterizations of metric subregularity can be found in \cite{DontchevRockafellar09}.  
Polyhedrality can be quite easy to verify, as we will see below.

\section{Linear Convergence of Douglas--Rachford/\\ Alternating Directions Method of Multipliers}\label{s:siadpmm}
We consider problems in the following format:  
\begin{equation*}\tag{\Pp}\label{e:Pp}
\ucmin{ J(u)+H(Au)}{u\in U}.   
\end{equation*}
There are many possibilities for solving such problems. We focus our attention on one of the more 
prevalent methods, the  alternating direction method of multipliers, abbreviated
as ADMM (primary sources include 
\cite{Rockafellar76, Gabay83, Eckstein, EckBert92, Glowinski75}).
This method is one of many {\em splitting methods} which are the principle approach to
handling the computational burden of large-scale, 
separable problems \cite{BoydParikhChuPeleatoEckstein11}.
ADMM belongs to a class of {\em augmented Lagrangian methods} whose original 
motivation was to regularize Lagrangian formulations of constrained optimization problems. 

Introducing a new variable $v\in V$, our problem is to solve
\begin{equation}\label{problemwithAu=v}
\ucmin{ J(u)+H(v)}{(u,v)\in U\times V},~~~\mbox{ subject to }  Au=v. 
\end{equation}

The  augmented Lagrangian $\Ltilde$ for \eqref{problemwithAu=v} is given by 
\begin{equation}
 \Ltilde(u,v,b)=J(u)+H(v)+\langle b, Au-v\rangle+\tfrac{\eta}{2}\|Au-v\|^2,
\end{equation}
where $b\in V$, $\eta>0$ is a fixed penalty parameter.
The ADMM algorithm for solving \eqref{problemwithAu=v} is, given $(u^k, v^k, b^k)$, $k\in\Nbb$, 
compute $(u^{k+1}, v^{k+1}, b^{k+1})$ by
\begin{eqnarray}
 \label{ADMM1'}u^{k+1}&\in&
  \argmin_u\left\{J(u)+\tfrac{\eta}{2}\|Au-v^k+\eta^{-1}b^k\|^2\right\};\\
 \label{ADMM2'}v^{k+1}&\in &
  \argmin_v\left\{H (v)+\tfrac{\eta}{2}\|Au^{k+1}-v+\eta^{-1}b^k\|^2\right\};\\
 \label{ADMM3'}b^{k+1}&=&
  b^{k}+\eta( Au^{k+1}- v^{k+1}).
\end{eqnarray}
Using $\tfrac{\eta}{2}\|Au-v+\eta^{-1}b^k\|^2-\tfrac{1}{2\eta}\|b^k\|^2=\langle b^k, Au-v\rangle+\tfrac{\eta}{2}\|Au-v\|^2$, 
the algorithm 
\eqref{ADMM1'}-\eqref{ADMM3'} can be written equivalently as 
\begin{center}
\fbox{%
        \addtolength{\linewidth}{-2\fboxsep}%
        \addtolength{\linewidth}{-2\fboxrule}%
    \begin{minipage}{\linewidth}%
	\begin{algorithm}[{\bf ADMM}] \label{a:ADMM}$~$\\
	    {\bf Initialization.} Choose $\eta > 0$ and $(v^{0}, b^{0})\in U\times V\times V$. \\
	    {\bf General Step ($k = 0 , 1 , \ldots$)}
\begin{subequations}
	    \begin{eqnarray}
		\label{ADMM1} u^{k+1}&\in&\argmin_{u}\klam{J(u) +\langle b^k, Au \rangle+\tfrac{\eta}{2}\|Au-v^k\|^2};\\
		\label{ADMM2} v^{k+1}&\in&
			  \argmin_{v}\klam{H(v)-\langle b^k, v\rangle+\tfrac{\eta}{2}\|Au^{k+1}-v\|^2};\\
		\label{ADMM3} b^{k+1}&=& b^k+\eta(Au^{k+1}-v^{k+1}).
\end{eqnarray}
\end{subequations}
	\end{algorithm}
    \end{minipage}%
}
\end{center}
The penalty parameter $\eta$ need not be a constant, and indeed evidence indicates that the choice of $\eta$ can greatly 
impact the 
complexity of the algorithm, but this is beyond the scope of this investigation, so we have left this parameter fixed.  

We do not specify how the argmin in steps \eqref{ADMM1}-\eqref{ADMM2} should be calculated, and indeed, 
the analysis that follows assumes that these can be computed exactly.  This is, 
of course, not true in practice.  In an attempt to circumvent this fact, the standard approach in 
numerical analysis is to accommodate summable errors.  The generalization to summable errors is, however,   
tantamount to {\em eventual} exact evaluation of \eqref{ADMM1}-\eqref{ADMM2} and thus, for all practical 
purposes, is no different from {\em immediate} exact evaluation, the latter involving errors that sum to zero. 

Even if we do assume infinite precision, a few remarks about the computational complexity of the individual 
steps of Algorithm \ref{a:ADMM} are warranted.  Inspection of 
\eqref{ADMM1} shows that an implicit method involving computation of the inverse of $A^TA$ may not be 
feasible if this is very large or does not otherwise enjoy a structure that allows for efficient inversion.  
If $J$ is smooth, a number of classical quasi-Newton methods, with error bounds, are available \cite{NocedalWright}.  
If $J$ is nonsmooth, then a forward-backward-type method such as FISTA \cite{BeckTeboulle09} could be applied.  
In the latter case new results on convergence of the iterates to a solution open the door to error bounds at this stage 
\cite{AttouchePeypouquet15}.  The second step \eqref{ADMM2} does not involve any matrix inversion, but will, 
for exact penalization, involve a nonsmooth penalty $H$.  Again, one has recourse to fast first-order methods that,
as of very recently, permit error bounds.

Our goal is to determine the rate of convergence 
of these algorithms so that they may be used as inner routines in an iteratively 
regularized procedure.  Knowing that an algorithm converges linearly, 
for instance, yields rational stopping criteria with computable estimates for the distance of 
the current iterate to the solution set.  

We present sufficient conditions for  {\em linear} convergence of 
Algorithm \ref{a:ADMM} by showing the same for the Douglas-Rachford algorithm
which is more amenable to the tools of abstract fixed point theory presented in Section \ref{s:abstract}. 
It is well known \cite{Gabay83, Eckstein} that the ADMM algorithm 
can be derived from the Douglas--Rachford algorithm, and vice versa, and therefore sufficient conditions for 
convergence of Douglas--Rachford also apply here.  
The first convergence result for Douglas--Rachford is due to Lions and Mercier \cite{LionsMercier79}, under the 
assumption of strong convexity and Lipschitz continuity of $J$.
Recent published work in this direction includes \cite{HeYuan12, Giselsson15, Giselsson15b}. 
Convergence rates with respect to objective values under various assumptions on the objective, all of which involving 
strong convexity, was established in \cite{HeYuan12, GoldsteinO'DonoghueSetzer12} 
which is conservative.  Local linear convergence of the iterates to a {\em solution} was established in \cite{Boley13} for 
linear and quadratic programs using spectral analysis.  In the first main result, Theorem \ref{t:convergence}, we describe 
two conditions that guarantee linear 
convergence of the ADMM iterates to a solution.  The first of these conditions follows from classical 
results of Lions and Mercier \cite{LionsMercier79}.  
The second condition is based on work of more recent vintage \cite{HesseLuke13}, is much more prevalent in 
applications and generalizes the results of \cite{Boley13}.  

The (Fenchel-Legendre) dual problem corresponding to the problem \eqref{e:Pp} is
(see, for instance \cite{CANO2})
 \begin{equation*}
\min_{w\in V}J^*(A^{T}w)+H^*(-w).
\end{equation*}
Here $J^*$ and $H^*$ are the Fenchel conjugates of $J$ and $H$ respectively.
Instead of working with this dual, we work with the following equivalent form with the change of variable $v=-w$:
 \begin{equation*}\tag{\Dp}\label{e:Dp}
\min_{v\in V}J^*(-A^{T}v)+H^*(v).
\end{equation*}
Under the assumption that the solutions $\ubar$ and $\bbar$ of the primal and dual problems exist and that the 
dual gap is zero, 
the following two inclusions characterize the solutions of the  problems \eqref{e:Pp} and 
\eqref{e:Dp} respectively:
\[
0 \in \partial J(\ubar)+\partial (H\circ A)(\ubar);
\]
\[
0 \in \partial \left(J^*\circ (-A^{T})\right)(\bbar)+\partial H^* (\bbar).
\]
In both cases, one has to solve an inclusion of the form
\begin{equation}\label{problem inclusion}
0 \in (B+D)(x),
\end{equation}
for general set-valued mappings $B$ and $D$.
For any $\eta>0$, the Douglas--Rachford algorithm \cite{DougRach56, LionsMercier79} 
for solving (\ref{problem inclusion}) is given by
\begin{eqnarray}\label{e:DRSalg1}
 &b^{k+1}&\in T'b^k\quad (k\in\Nbb),\\
\mbox{ for } &T'&\equiv \Jcal_{\eta D}\left(\Jcal_{\eta B}(\Id-\eta D)+\eta D\right),
\label{e:T'}
\end{eqnarray}
where  $\Jcal_{\eta D}$ and $\Jcal_{\eta B}$ are the {\em resolvents} of $\eta D$ and $\eta B$ respectively. 
The connection between the ADMM  algorithm \eqref{ADMM1}-\eqref{ADMM3} 
and  the Douglas--Rachford algorithm \eqref{e:DRSalg1} was first discovered by Gabay \cite{Gabay83} and is 
derived for convenience in the Appendix.  

Given $b^0$ and $v^0\in Db^0$, following \cite{Svaiter11}, define the new variable 
$x^0\equiv b^0+\eta v^0$ so that $b^0= \Jcal_{\eta D}x^0$.  
We thus arrive at an alternative formulation of the Douglas--Rachford algorithm \eqref{e:DRSalg1}:
\begin{eqnarray}\label{e:DRSalg2}
 &x^{k+1}&\in Tx^k \quad (k\in\Nbb),\\
\mbox{ for } &T&\equiv \tfrac{1}{2}(R_{\eta B}R_{\eta D}+\Id)=\Jcal_{\eta B}(2\Jcal_{\eta D}-\Id)+(\Id-\Jcal_{\eta D}),
\label{e:T_DR}
\end{eqnarray}
where $R_{\eta D}$ and $R_{\eta B}$ are the reflectors of the respective resolvents.
This is exactly the form of Douglas--Rachford considered in \cite{LionsMercier79}.  
\begin{remark}[proximal mappings of convex functions]\label{r:resolvents sv}
Note that for our application 
\begin{equation}\label{e:BnD}
B\equiv\partial \left(J^*\circ(-A^{T})\right)\und D\equiv\partial H^*,
\end{equation}
and so the 
resolvent mappings are the proximal mappings of the convex functions $\left(J^*\circ(-A^{T})\right)$ 
and $H^*$ respectively, and hence the resolvent mappings and corresponding fixed point operator $T$ 
are single-valued  \cite{Moreau65}.  
\end{remark}

\begin{propn}\label{t:pre convergence} 
 Let $J:U\to \Rbb\cup\{+\infty\}$ and $H:V\to \Rbb$ 
be proper, lsc and convex.  
Let $\map{A}{U}{V}$ be linear %bijective 
and suppose there exists a solution to $0\in  (B+D)(x)$ for 
$B$ and $D$ defined by \eqref{e:BnD}. For fixed $\eta>0$, given any 
initial points $x^0$ and $\paren{b^0, v^0}\in \gph D$ such that $x^0=b^0+\eta v^0$, 
the sequences $\paren{b^k}_{k\in\Nbb}$, $\paren{x^k}_{k\in\Nbb}$ and $\paren{v^k}_{k\in\Nbb}$  
defined respectively by \eqref{e:DRSalg1}, \eqref{e:DRSalg2} and $v^k\equiv\tfrac1\eta\paren{x^k-b^k}$
converge to points $\bbar\in\Fix T'$, $\xbar\in\Fix T$ and $\vbar\in D\paren{\Fix T'}$.
The point $\bbar = \Jcal_{\eta D}\xbar$ is a solution to \eqref{e:Dp},  
and $\vbar=\frac{1}{\eta}\left(\xbar-\bbar \right)\in D\bbar$.  If, in addition, $A$ has full column rank, 
then  the sequence $\paren{b^k, v^k}_{k\in\Nbb}$ corresponds exactly to the sequence of points generated in 
steps \eqref{ADMM2} and \eqref{ADMM3} of Algorithm \ref{a:ADMM} and the sequence $\paren{u^{k+1}}_{k\in\Nbb}$
generated by \eqref{ADMM1} 
converges to $\ubar$, a solution to \eqref{e:Pp}.
\end{propn}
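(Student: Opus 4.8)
The plan is to reduce everything to the classical convergence theory of the Douglas--Rachford iteration for a sum of two maximally monotone operators, and then to transfer the conclusions to the three sequences of interest and, under full column rank, to the primal ADMM iterate.

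First I would record that $B=\partial(J^*\circ(-A^{T}))$ and $D=\partial H^*$ are maximally monotone, being subdifferentials of the proper, lsc, convex functions $J^*\circ(-A^{T})$ and $H^*$ (cf. \cite[Theorem 12.17]{VA}); properness of $J^*\circ(-A^{T})$ is guaranteed by the standing assumption that $0\in\range{B+D}$. Consequently the operator $T$ in \eqref{e:T_DR} is firmly nonexpansive, i.e. a $\tfrac12$-averaged map, and by the Lions--Mercier theorem the assumed existence of a solution to $0\in(B+D)(x)$ is equivalent to $\Fix T\neq\emptyset$. Since $T$ is averaged with nonempty fixed-point set and $V$ is finite-dimensional, the Picard iteration $x^{k+1}\in Tx^k$ converges to some $\xbar\in\Fix T$.

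Next I would link the three sequences. Using the initial data $x^0=b^0+\eta v^0$ with $(b^0,v^0)\in\gph D$, one has $b^0=\Jcal_{\eta D}x^0$; I would then verify by induction, via the equivalence of the two Douglas--Rachford forms \eqref{e:DRSalg1} and \eqref{e:DRSalg2}, that $b^k=\Jcal_{\eta D}x^k$ and $v^k=\tfrac1\eta(x^k-b^k)\in Db^k$ for every $k$. Because $\Jcal_{\eta D}$ is firmly nonexpansive, hence continuous, $b^k\to\bbar:=\Jcal_{\eta D}\xbar$ and $v^k\to\vbar:=\tfrac1\eta(\xbar-\bbar)$. Expanding the fixed-point relation $\xbar=T\xbar$ yields $\bbar=\Jcal_{\eta B}(2\bbar-\xbar)$, which together with $\bbar=\Jcal_{\eta D}\xbar$ gives $\vbar\in D\bbar$ and $-\vbar\in B\bbar$; adding these inclusions produces $0\in(B+D)\bbar$, so $\bbar$ solves \eqref{e:Dp} and $\bbar\in\Fix T'$, with closedness of $\gph D$ providing the cross-check $\vbar\in D\bbar$.

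For the full-column-rank claim I would invoke the ADMM--Douglas--Rachford equivalence derived in the Appendix to identify $(b^k,v^k)$ with the iterates produced by \eqref{ADMM2}--\eqref{ADMM3} and $(u^{k+1})$ with \eqref{ADMM1}. When $A$ has full column rank, $A^{T}A$ is invertible and the quadratic term in \eqref{ADMM1} is strictly convex, so $u^{k+1}$ is the unique minimizer; rearranging \eqref{ADMM3} gives $Au^{k+1}=v^{k+1}+\tfrac1\eta(b^{k+1}-b^k)\to\vbar$, whence $u^{k+1}=(A^{T}A)^{-1}A^{T}(Au^{k+1})\to(A^{T}A)^{-1}A^{T}\vbar=:\ubar$ with $A\ubar=\vbar$. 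Finally I would pass to the limit in the first-order optimality conditions of \eqref{ADMM1} and \eqref{ADMM2}: substituting \eqref{ADMM3} to eliminate the penalty terms rewrites them as $-A^{T}(b^{k+1}+\eta(v^{k+1}-v^k))\in\partial J(u^{k+1})$ and $b^{k+1}\in\partial H(v^{k+1})$, and since $v^{k+1}-v^k\to0$, closedness of the graphs of $\partial J$ and $\partial H$ yields $-A^{T}\bbar\in\partial J(\ubar)$ and $\bbar\in\partial H(A\ubar)$. Combining these, using $A^{T}\partial H(A\ubar)\subseteq\partial(H\circ A)(\ubar)$, gives $0\in\partial J(\ubar)+\partial(H\circ A)(\ubar)$, the optimality condition for \eqref{e:Pp}, so $\ubar$ solves the primal problem. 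I expect the main obstacle to be the bookkeeping that links the three sequences and justifies passing to the limit: establishing $b^k=\Jcal_{\eta D}x^k$ through the equivalence of the two Douglas--Rachford forms, ensuring the residuals $b^{k+1}-b^k$ and $v^{k+1}-v^k$ vanish (asymptotic regularity of the averaged map $T$), and invoking closedness of the maximally monotone graphs at the right moment. The convergence of the master sequence $x^k$ and the recovery of $\ubar$ from $A\ubar=\vbar$ under full column rank are comparatively routine.
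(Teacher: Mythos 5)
Your proof is correct, but it takes a genuinely different route from the paper's. The paper never argues on the master sequence $x^k$ at all: it rewrites \eqref{e:DRSalg1} in the two-step Eckstein--Svaiter form \eqref{a:DRSalg1'}, uses the representation lemma \cite[Corollary 3.6.3]{Eckstein} for existence and uniqueness of each step, and then cites Svaiter's theorem \cite[Theorem 1]{Svaiter11} to obtain convergence of $(b^k,v^k)$ to a point $(\bbar,\vbar)\in\gph D$ with $\bbar$ solving \eqref{e:Dp}; convergence of $x^k$ then falls out from the change of variables $x^k=b^k+\eta v^k$. You reverse this order: Krasnoselskii--Mann applied to the firmly nonexpansive $T$ gives $x^k\to\xbar\in\Fix T$, continuity of $\Jcal_{\eta D}$ transfers the convergence to $b^k$ and $v^k$, and dual optimality is read off from the fixed-point identity $\bbar=\Jcal_{\eta B}(2\bbar-\xbar)$. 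Both arguments are sound in the paper's Euclidean setting, but they buy different things. The paper's detour through \cite{Svaiter11} is exactly what makes its later remark true that the proposition \emph{and its proof} carry over to infinite-dimensional Hilbert spaces: there, Krasnoselskii--Mann yields only weak convergence of $x^k$, and resolvents are not weakly continuous, so your transfer step $b^k\to\Jcal_{\eta D}\xbar$ breaks down -- Svaiter's theorem is precisely the device that gives weak convergence of the shadow sequence without it. Conversely, your treatment of the primal sequence is more self-contained: where the paper cites \cite[Proposition 3.42]{Eckstein} with only a footnote sketch, you recover $\ubar=(A^{T}A)^{-1}A^{T}\vbar$ explicitly and verify optimality by passing to the limit in the ADMM optimality conditions $-A^{T}\bigl(b^{k+1}+\eta(v^{k+1}-v^k)\bigr)\in\sd J(u^{k+1})$ and $b^{k+1}\in\sd H(v^{k+1})$, using closedness of the subdifferential graphs and the elementary inclusions $A^{T}\sd H(A\ubar)\subseteq\sd(H\circ A)(\ubar)$ and $\sd J(\ubar)+\sd(H\circ A)(\ubar)\subseteq\sd\bigl(J+H\circ A\bigr)(\ubar)$; this is a complete and checkable replacement for that citation.
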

\begin{proof}
   Following \cite{Eckstein, Svaiter11}, we rewrite the Douglas--Rachford iteration \ref{e:DRSalg1} in two steps:
Given $(b^0, v^0)\in\gph D$, for $k\in\Nbb$ do 
\begin{subequations}
\label{a:DRSalg1'}
\begin{eqnarray}
 \label{a:DRSalg1'a} &\mbox{find }(q^{k+1}, s^{k+1})\in \gph(B)\mbox{ such that }q^{k+1}+\eta s^{k+1}=b^k-\eta v^k;&\\
 \label{a:DRSalg1'b} &\mbox{find }(b^{k+1}, v^{k+1})\in \gph(D) \mbox{ such that }b^{k+1}+\eta v^{k+1}=q^{k+1}+\eta v^k.&
\end{eqnarray}
\end{subequations}
The existence and uniqueness in the above steps follows from the representation lemma \cite[Corollary 3.6.3]{Eckstein}. 
The mappings $B,D$ are  maximal 
monotone operators as the subdifferentials of proper lsc convex functions.  This together with the 
fact that the solution set of (\ref{problem inclusion}) is non-empty yields that
the sequence $(b^k, v^k)_{k\in \Nbb}$ defined by the algorithm \eqref{a:DRSalg1'} converges to some 
$(\bbar, \vbar)$ such that $\vbar\in D\bbar$ and $\bbar$ solves \eqref{e:Dp} \cite[Theorem 1]{Svaiter11}.  
By the change of variables 
$x^k=b^k + \eta v^k$, it follows that $x^k\to \xbar\in \Fix T$ for $T$ given by \eqref{e:T_DR}.

For these definitions of $B$ and $D$, the sequence $\paren{b^k}_{k\in\Nbb}$ generated by 
$b^k\equiv\Jcal_{\eta D}x^k$ for $x^k$ generated by \eqref{e:DRSalg2} corresponds exactly to the sequence 
$\paren{b^k}_{k\in\Nbb}$ generated by \eqref{e:DRSalg1}.  Moreover, if $A$ is full column rank, then 
by the discussion in \cite{Eckstein} (see the Appendix) both $\paren{b^k}_{k\in\Nbb}$ and 
the sequence $\paren{v^k}_{k\in\Nbb}$ generated by $v^k\equiv \tfrac{1}{\eta}\paren{x^k-b^k}\in Db^k$ 
correspond exactly to the sequences of points $b^k$ and $v^k$ generated by \eqref{ADMM1}-\eqref{ADMM3}. 
Consequently, by \cite[Proposition 3.42]{Eckstein}\footnote{By convergence of $v^k\to \vbar$ and $b^k\to \bbar$ 
and the update rule
\eqref{ADMM3}, $Au^k\to \vbar$, from which the claim follows -- see the Appendix.} 
the sequence $\paren{u^k}_{k\in\Nbb}$ defined by \eqref{ADMM1} converges to a solution of \eqref{e:Pp}. 
\end{proof}

We now state sufficient conditions guaranteeing linear convergence of the ADMM and the Douglas--Rachford algorithms.  
The first conditions \eqref{t:convergence_i}
of Theorem \ref{t:convergence} are classical.  The second conditions are new. 
\begin{thm}[local linear convergence I]\label{t:convergence} 
 Let $J:U\to \Rbb\cup\{+\infty\}$ and $H:V\to \Rbb$ 
be proper, lsc and convex.  Suppose there exists a solution to $0\in (B+D)(x)$  
for  $B$ and $D$ defined by \eqref{e:BnD} where 
$\map{A}{U}{V}$ is an injective linear mapping.  Let $\xhat\in \Fix T$ for $T$ defined
by \eqref{e:T_DR}.  
For fixed $\eta>0$ and any given triplet of points $\paren{b^0, v^0, x^0}$  satisfying 
$x^0\equiv b^0+\eta v^0$, with 
$v^0\in Db^0$, generate the sequence $(v^k, b^k)_{k\in\Nbb}$ by \eqref{ADMM1}-\eqref{ADMM3} 
and the sequence $(x^k)_{k\in\Nbb}$ by \eqref{e:DRSalg2}.   
\begin{enumerate}[(i)]
 \item\label{t:convergence_i} Let $\Ocal\subset U$ be a neighborhood of $\xhat$ on which 
 $H$ is strongly convex with constant $\mu$ and $\sd H$ is $\beta$-inverse strongly monotone
for some $\beta>0$.  
Then, for any $\paren{b^0, v^0, x^0}\in \Ocal$  satisfying $x^0\equiv b^0+\eta v^0\in\Ocal$,
the sequences $(x^k)_{k\in\Nbb}$ and $(v^k, b^k)_{k\in\Nbb}$ converge linearly to the 
respective points $\xbar\in \Fix T$ and $\paren{\bbar,\vbar}$
with rate at least $K=(1-\frac{2\eta\beta\mu^2}{(\mu+\eta)^2})^{\tfrac{1}{2}}<1$. 
 \item\label{t:convergence_ii}  Suppose that $\map{T}{W}{W}$ for some 
 affine subspace $W\subset U$ with $\xhat\in W$.  On the neighborhood $\Ocal$ of $\xhat$ relative
 to $W$, that is $\Ocal\cap W$,  suppose there is a constant 
$\kappa>0$ such that  
\begin{equation}\label{e:coercive}
   \|x-x^+\|\geq \sqrt{\kappa} \dist(x,\Fix T)\quad\forall x\in \Ocal\cap W,~\forall x^+\in Tx.
\end{equation}
Then the sequences $(x^k)_{k\in\Nbb}$ and $(v^k, b^k)_{k\in\Nbb}$ converge linearly to 
the respective points $\xbar\in \Fix T\cap W$ and $\paren{\bbar,\vbar}$
with rate bounded above by $\sqrt{1-\kappa}$.
 \end{enumerate}
In either case, the limit point $\bbar=\Jcal_{\eta D}\xbar$ is a solution to 
\eqref{e:Dp}, $\vbar\in D\bbar$ and the sequence $\paren{u^k}_{k\in\Nbb}$ 
given by \eqref{ADMM1} of Algorithm \ref{a:ADMM} converges to 
$\ubar$, a solution of \eqref{e:Pp}.
 \end{thm}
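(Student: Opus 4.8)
The strategy is to obtain the linear rate for the Douglas--Rachford iterates $\paren{x^k}$ first and then to transfer it to the ADMM iterates, convergence of all three sequences to solutions being already supplied by Proposition \ref{t:pre convergence}. As a preliminary I would record that $T$ in \eqref{e:T_DR} is firmly nonexpansive, being the average of $\Id$ with the composition $R_{\eta B}R_{\eta D}$ of two nonexpansive reflectors, and single-valued by Remark \ref{r:resolvents sv}. Since $\xhat\in\Fix T$, evaluating the firm-nonexpansiveness inequality with one argument fixed at $\xhat$ yields $\norm{Tx-\xhat}^2+\norm{x-Tx}^2\le\norm{x-\xhat}^2$, so $T$ is quasi-firmly nonexpansive on any set, in particular on $W$; this is the hypothesis shared by Theorem \ref{t:gen convergence} and the governing estimate \cite[Lemma 3.1]{HesseLuke13}.

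\emph{Part \eqref{t:convergence_i}.} Here I would translate the hypotheses on $H$ into properties of $D=\partial H^*$. Strong convexity of $H$ with modulus $\mu$ makes $H^*$ differentiable with $\tfrac1\mu$-Lipschitz gradient, so $D=\nabla H^*$ is $\tfrac1\mu$-Lipschitz (equivalently $\mu$-cocoercive); while $\beta$-inverse strong monotonicity of $\partial H$, read through $\partial H^*=(\partial H)^{-1}$, makes $D$ $\beta$-strongly monotone. These are exactly the Lions--Mercier conditions \cite{LionsMercier79}: strong monotonicity forces the resolvent $\Jcal_{\eta D}$, hence the reflector $R_{\eta D}$, to be a strict contraction, and composing with the nonexpansive $R_{\eta B}$ and averaging shows $T$ is a Banach contraction on $\Ocal$. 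The factor $K$ is then read off from the sharp contraction estimate for $R_{\eta D}$ that uses \emph{both} the strong-monotonicity constant $\beta$ and the Lipschitz constant $\tfrac1\mu$; local strong convexity confines the estimate to $\Ocal$, but since $\xhat\in\Fix T\subset\Ocal$ the contraction keeps the orbit in $\Ocal$ and the rate persists along it.

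\emph{Part \eqref{t:convergence_ii}.} This part is essentially the proof of Theorem \ref{t:gen convergence} with the coercivity condition \eqref{e:coercive} supplied by hypothesis rather than deduced from metric subregularity. I would combine the quasi-firm nonexpansiveness of $T$ on $W$ with \eqref{e:coercive} and apply \cite[Lemma 3.1]{HesseLuke13} to obtain $\dist(x^{k+1},\Fix T)\le\sqrt{1-\kappa}\,\dist(x^k,\Fix T)$ for all iterates in $\Ocal\cap W$; since Proposition \ref{t:pre convergence} already places $x^k\to\xbar\in\Fix T\cap W$, this is linear convergence of $\paren{x^k}$ with rate at most $\sqrt{1-\kappa}$.

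\emph{Transfer to the ADMM iterates, and the main obstacle.} In both cases, once $x^k\to\xbar$ linearly, nonexpansiveness of the resolvent gives $\norm{b^k-\bbar}=\norm{\Jcal_{\eta D}x^k-\Jcal_{\eta D}\xbar}\le\norm{x^k-\xbar}$, and $v^k=\tfrac1\eta\paren{x^k-b^k}$ then inherits the same linear rate, so $\paren{b^k,v^k}\to\paren{\bbar,\vbar}$ linearly; the identifications $\bbar=\Jcal_{\eta D}\xbar$ solving \eqref{e:Dp} and $\vbar\in D\bbar$ come from Proposition \ref{t:pre convergence}. The delicate point is the primal iterate $u^k$, which the Douglas--Rachford/ADMM dictionary controls only indirectly. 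Here injectivity of $A$ is essential: from the update \eqref{ADMM3}, $Au^{k+1}-v^{k+1}=\tfrac1\eta\paren{b^{k+1}-b^k}\to 0$ linearly, and with $v^{k+1}\to\vbar$ linearly this forces $Au^{k+1}\to\vbar$ linearly; since $\vbar$ lies in the closed range of $A$ it equals $A\ubar$ for a unique $\ubar$, and applying the bounded left inverse $\paren{A^TA}^{-1}A^T$ propagates the rate from $Au^{k+1}$ to $u^{k+1}$. I expect this last step---pushing the contraction, which lives on the dual Douglas--Rachford variable, back to the primal variable through a merely injective (not surjective) $A$---to be the main obstacle, together with the constant bookkeeping needed to produce the exact rate $K$ in Part \eqref{t:convergence_i}.
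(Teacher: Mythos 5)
Your proposal is correct and follows essentially the same route as the paper's own proof: part \eqref{t:convergence_i} translates the hypotheses on $H$ into Lipschitz continuity (constant $1/\mu$) and $\beta$-strong monotonicity of $\partial H^*=(\partial H)^{-1}$ and invokes Proposition 4 of \cite{LionsMercier79}; part \eqref{t:convergence_ii} combines quasi-firm nonexpansiveness of $T$ with the coercivity condition \eqref{e:coercive} and applies \cite[Lemma 3.1]{HesseLuke13}; and the transfer to $(b^k,v^k)$ and $u^k$ uses nonexpansiveness of $\Jcal_{\eta D}$, the relation $v^k=\tfrac1\eta\paren{x^k-b^k}$, and Proposition \ref{t:pre convergence}, exactly as in the paper. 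Your closing remarks on propagating the linear rate to $u^k$ through the bounded left inverse $(A^TA)^{-1}A^T$ go slightly beyond what the theorem asserts (only convergence of $u^k$ is claimed, handled in the paper by Proposition \ref{t:pre convergence} via \cite{Eckstein}), but this addition is sound and does not alter the argument.
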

\begin{proof}
The final statement of the theorem  and the statements about the sequence $\paren{b^k, v^k}$ 
follows from Proposition \ref{t:pre convergence} where it is shown that 
the sequence $(v^k, b^k)_{k\in\Nbb}$ generated 
by \eqref{ADMM1}-\eqref{ADMM3} corresponds to sequences $\paren{b^k}_{k\in\Nbb}$ 
and $\paren{v^k}_{k\in\Nbb}$ generated respectively by \eqref{e:DRSalg1} and 
$v^k=\tfrac1\eta\paren{x^k-b^k}\in D b^k$
for $\paren{x^k}_{k\in\Nbb}$ generated by \eqref{e:DRSalg2}.  The linear convergence 
of the iterates of Algorithm \ref{a:ADMM} claimed in statements \eqref{t:convergence_i}
and \eqref{t:convergence_ii} follows from the properties of the operators $T'$ and 
$T$ defined respectively by \eqref{e:T'} and \eqref{e:T_DR}.

Part \eqref{t:convergence_i}. 
Since $H$ is assumed to be strongly convex with  $\mu>0$ the modulus of convexity on $\Ocal$, 
$\partial H$ is strongly monotone with modulus of monotonicity $\mu$
\cite[Example 22.3]{BauschkeCombettes11}. Since $\sd H$ is also maximally monotone, using the identity 
$\sd H=(\sd H^*)^{-1}$ (see, for example, \cite[Corollary 3.49]{PenotCWD}) we conclude that  $\partial H^*$ is Lipschitz continuous
with constant $\frac{1}{\mu}$. 
% (The finite dimensional argument for this fact \cite[Corollary 12.55]{VA} also holds for Hilbert spaces.) 
Moreover, since $\sd H$ is $\beta$-inverse strongly monotone on $\Ocal$, we have for any $x,y \in \Ocal$
\[\langle u-v, x-y\rangle \geq \beta \|u-v\|^2, \quad \mbox{whenever}\quad
u\in \sd H(x), v\in \sd H(y).\] Hence  $\sd H^*$ is strongly monotone with modulus $\beta$ and Proposition 4 of \cite{LionsMercier79}
applies to yield linear convergence of the sequences $\paren{x^k}$ and $\paren{b^k}$ to the respective 
limit points $\xbar$ and $\bbar$ 
 \begin{equation}
  \|x^{k}-\xbar\|\leq L K^k;~~~\|b^{k}-\bbar\|\leq L K^k,
 \end{equation}
where $L$ is some constant,  $K=(1-\frac{2\eta\beta}{(1+\eta \xi)^2})^{\tfrac{1}{2}}$ and $\xi=\frac{1}{\mu}$ is the Lipschitz
 constant for the set-valued map $\sd H^*$ on $\Ocal$.
Now, since $v^{k}=\tfrac{1}{\eta}(x^{k}-b^{k})$, we have for $v^k\to \vbar:=\tfrac{1}{\eta}(\xbar-\bbar)$ with the same
rate as $x^k$ and $b^k$, modulo a constant:
 \begin{equation}
  \|v^{k}-\vbar\|\leq \tfrac{1}{\eta} \left(  \|x^{k}-\xbar\|+ \|\bbar-b^{k}\|\right)\leq\frac{2L K^k}{\eta}.
 \end{equation}
This completes the proof of the first statement. 
\hfill$\triangle$

 Part \eqref{t:convergence_ii}.
 Since $B$ and $D$ are maximal monotone operators the reflected resolvents 
$R_{\eta B}$ and $R_{\eta D}$ are
nonexpansive \cite[Proposition 23.7]{BauschkeCombettes11}. The composition $R_{\eta B} R_{\eta D}$ 
is nonexpansive which implies that the mapping $T$ is firmly nonexpansive 
\cite[Proposition 4.2]{BauschkeCombettes11}, and hence quasi-firmly nonexpansive on $W$.  
Condition \eqref{e:coercive} 
is the coercivity condition (b) of \cite[Lemma 3.1]{HesseLuke13} which guarantees local linear convergence of 
fixed-point iterations for $(S,\epsilon)$-firmly nonexpansive mappings ($S\subset\Fix T\cap W$).
Quasi-firmly nonexpansive mappings, under consideration here, are $(\Fix T\cap W,0)$-firmly nonexpansive. 
Thus, by  \cite[Lemma 3.1]{HesseLuke13} the sequence $(x^k)_{k\in\Nbb}$ converges linearly on the 
neighborhood $\Ocal$ with rate  $\sqrt{1-\kappa}$.  Nonexpansiveness of the resolvent $\Jcal_{\eta D}$
and the relations $b^k=\Jcal_{\eta D}x^k$ and $v^k=\tfrac1\eta\paren{x^k-b^k}$ then complete the proof of 
the second statement.
\end{proof}
 
\begin{remark}
   The strong convexity assumption \eqref{t:convergence_i} of Theorem \ref{t:convergence} fails in a wide range of
   applications, and in particular for feasibility problems (minimizing the sum of indicator functions).  
By Theorem \ref{t:gen convergence}, case \eqref{t:convergence_ii} of Theorem \ref{t:convergence}, in contrast, holds in 
general for mappings $T$ for which $\Id-T$ is {\em metrically subregular} and the fixed point sets are {\em isolated points} 
with respect to an affine subspace to which the iterates are confined.   The restriction to the affine subspace $W$ 
is a natural generalization for the Douglas--Rachford algorithm, where the iterates are known to stay confined to  
affine subspaces orthogonal to the fixed point set \cite{HesseLukeNeumann14, Phan15}.  It would be far too restrictive 
to require that $\Fix T$ be a singleton on the entire ambient space $V$ rather than with respect to just the 
affine hull of the iterates.   We show that metric subregularity with respect to this affine subspace holds in 
many applications.   (See also Example \ref{eg:example}.)
% This generalizes and simplifies 
% recent studies in even more restrictive settings \cite{HongLuo13} .    
\end{remark}

\begin{remark}
Proposition \ref{t:pre convergence} and Theorem \ref{t:convergence} and their proofs also hold in infinite 
dimensional Hilbert spaces.  
Lemma 3.1 of \cite{HesseLuke13} is stated for Euclidean spaces, but the proof holds also on general Hilbert spaces.  
\end{remark}

\begin{propn}[polyhedrality of the Douglas--Rachford operator]\label{t:polyhedral subdiff}
Let $J:U\to \Rbb\cup\{+\infty\}$ and $H:V\to \Rbb$ 
be proper, lsc and convex.  
Suppose, in addition, that $J$ and $H$ are piecewise linear-quadratic. 
The operator $\map{T}{V}{V}$ defined by \eqref{e:T_DR} with $\eta>0$ fixed, is polyhedral
for $B$ and $D$ given by \eqref{e:BnD} where
$\map{A}{U}{V}$ is a  linear mapping.  
\end{propn}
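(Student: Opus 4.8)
The plan is to reduce the polyhedrality of $T$ to two facts: first, that the constituent monotone operators $B$ and $D$ in \eqref{e:BnD} are themselves polyhedral; and second, that the class of polyhedral mappings is closed under the operations --- inversion, affine combination, sum, and composition --- out of which $T$ is assembled in \eqref{e:T_DR}. Since by Remark \ref{r:resolvents sv} every mapping appearing below is in fact single-valued, polyhedrality just amounts to showing that each relevant graph is a finite union of polyhedral convex sets.

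First I would establish that $B$ and $D$ are polyhedral. The key input is that the piecewise linear-quadratic class (Definition \ref{d:plq}) is preserved under the two constructions needed here: Fenchel conjugation, so that $J^*$ and $H^*$ are again proper, lsc, convex and piecewise linear-quadratic (by \cite[Theorem 11.14]{VA}); and precomposition with a linear map, so that $J^*\circ(-A^{T})$ is piecewise linear-quadratic, since $\dom(J^*\circ(-A^{T}))$ is the preimage under $-A^{T}$ of a finite union of polyhedral sets, hence again such a union, and a quadratic composed with a linear map is quadratic. It then remains to invoke the standard fact that the subdifferential of a proper, lsc, convex, piecewise linear-quadratic function is a polyhedral mapping \cite[Proposition 12.30]{VA}, i.e. its graph is a finite union of polyhedral convex sets. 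Applying this to $J^*\circ(-A^{T})$ and to $H^*$ yields that $B=\partial(J^*\circ(-A^{T}))$ and $D=\partial H^*$ are polyhedral.

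Next I would record the closure properties that propagate polyhedrality through \eqref{e:T_DR}. Writing a polyhedral mapping's graph as a finite union $\bigcup_i P_i$ of polyhedra, the following graph manipulations all preserve the ``finite union of polyhedra'' form, because linear images and linear preimages of polyhedra are again polyhedra and finitely many indices remain finitely many: (i) inversion, which merely swaps the two coordinate blocks of each $P_i$; (ii) affine combination $x\mapsto \alpha\Phi(x)+\beta x$, whose graph is the image of $\gph\Phi$ under $(x,y)\mapsto(x,\alpha y+\beta x)$; (iii) the sum $\Phi+\Psi$, whose graph is the image under $(x,y,z)\mapsto(x,y+z)$ of the finite union of polyhedra $\{(x,y,z):(x,y)\in\gph\Phi,\ (x,z)\in\gph\Psi\}$; and (iv) composition $\Phi\circ\Psi$, whose graph is the projection onto the outer coordinates of $\{(x,y,z):(x,y)\in\gph\Psi,\ (y,z)\in\gph\Phi\}$, itself a finite union of polyhedra. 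In particular, since $\Jcal_{\eta D}=(\Id+\eta D)^{-1}$ is obtained from the polyhedral $D$ by an affine combination (ii) followed by inversion (i), and likewise for $\Jcal_{\eta B}$, both resolvents are polyhedral.

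Finally I would assemble $T=\Jcal_{\eta B}(2\Jcal_{\eta D}-\Id)+(\Id-\Jcal_{\eta D})$ from \eqref{e:T_DR}: the inner map $2\Jcal_{\eta D}-\Id$ is an affine combination of the polyhedral $\Jcal_{\eta D}$, hence polyhedral by (ii); composing on the left with the polyhedral $\Jcal_{\eta B}$ is polyhedral by (iv); $\Id-\Jcal_{\eta D}$ is polyhedral by (ii); and the outer sum is polyhedral by (iii). Hence $T$ is polyhedral. The main obstacle is the composition closure (iv): unlike inversion and sums it is not a single linear image but requires eliminating the intermediate variable, and the justification that projecting a polyhedron onto a coordinate subspace returns a polyhedron (Fourier--Motzkin elimination, equivalently that linear images of polyhedra are polyhedral) is the one nontrivial ingredient; everything else is bookkeeping on finitely many polyhedral pieces.
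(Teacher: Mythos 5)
Your proof is correct and takes essentially the same route as the paper's: Fenchel conjugates are piecewise linear-quadratic by \cite[Theorem 11.14]{VA}, the subdifferentials $B$ and $D$ are therefore polyhedral by \cite[Proposition 12.30]{VA}, and polyhedrality is then propagated to $T$ through operations on graphs. The differences are only cosmetic: you assemble $T$ from the resolvent form in \eqref{e:T_DR} rather than the reflector form $\tfrac12\paren{R_{\eta B}R_{\eta D}+\Id}$ used in the paper, and you spell out steps the paper leaves implicit, namely that $J^*\circ(-A^{T})$ is itself piecewise linear-quadratic and that the inversion/sum/composition closure properties follow from linear images and projections of polyhedra.
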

\begin{proof}
Since the functions $J$ and $ H$ are proper, lsc, convex and piecewise linear-quadratic, by \cite[Theorem 11.14]{VA} so are
the Fenchel conjugates, $J^*$ and $H^*$.  The subdifferentials 
$B\equiv\partial \left(J^*\circ(-A^{T})\right)$  and $D\equiv\partial H^*$ and their resolvents, therefore, 
are polyhedral mappings \cite[Proposition 12.30]{VA}. Since the graphs of reflectors $R_{\eta B}$ and
$R_{\eta D}$ correspond to the graphs of their respective resolvents $\Jcal_{\eta B}$ and $\Jcal_{\eta D}$
through a linear transformation, $R_{\eta B}$ and
$R_{\eta D}$ are also polyhedral mappings. Since by Remark \ref{r:resolvents sv} the 
resolvents $\Jcal_{\eta B}$ and $\Jcal_{\eta D}$ are single-valued, the reflectors 
$R_{\eta B}$ and $R_{\eta D}$ are  
also single-valued.  Therefore $T=\frac{1}{2}( R_{\eta B} R_{\eta D}+I)$ is polyhedral as the composition of 
single-valued polyhedral mappings.
\end{proof}

\begin{thm}[local linear convergence II]\label{t:convergence-polyhedrality}
Let $J:U\to \Rbb\cup\{+\infty\}$ and $H:V\to \Rbb$ 
be proper, lsc, convex, piecewise linear-quadratic functions (see Definition \ref{d:plq}). 
Define the operator $\map{T}{V}{V}$ by \eqref{e:T_DR} with $\eta>0$ fixed and
$B$ and $D$ given by \eqref{e:BnD} where
$\map{A}{U}{V}$ is a  linear mapping.  
Suppose that there exists a solution to $0\in (B+D)(x)$, that   
 $\map{T}{W}{W}$ for $W$ some affine subspace of 
$V$ and that $\Fix T\cap W$ is an isolated point $\{\xbar\}$.   Then there is a 
neighborhood $\Ocal$ of $\xbar$ such that, for all 
starting points $(x^0,v^0, b^0)$ with 
$x^0\equiv b^0+\eta v^0\in \Ocal\cap W$ for $v^0\in D(b^0)$ so that $\Jcal_{\eta D}x^0=b^0$,  
the sequence $(x^k)_{k\in\Nbb}$ generated by \eqref{e:DRSalg2} 
converges linearly to $\xbar$ where  $\bbar\equiv \Jcal_{\eta D}\xbar$ is a solution 
to \eqref{e:Dp}.  The rate of linear convergence is bounded above by $\sqrt{1-\kappa}$, 
where $\kappa=c^{-2}>0$, for $c$ a constant of metric subregularity of $\Id-T$  at $\xbar$ for 
the neighborhood $\Ocal$.   
Moreover,  the sequence $\paren{b^k, v^k}_{k\in\Nbb}$ 
generated by Algorithm \ref{a:ADMM}  converges linearly to $\paren{\bbar,\vbar}$
 with $\vbar=\tfrac1\eta\paren{\xbar-\bbar}$, and the sequence 
$\paren{u^k}_{k\in\Nbb}$ defined by \eqref{ADMM1} of Algorithm \ref{a:ADMM} 
converges to a solution to \eqref{e:Pp}.
\end{thm}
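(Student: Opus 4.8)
The plan is to recognize this theorem as the synthesis of three earlier results: the polyhedrality of the Douglas--Rachford operator (Proposition \ref{t:polyhedral subdiff}), the abstract linear-convergence statement for polyhedral quasi-firmly nonexpansive operators (Corollary \ref{t:polyhedral convergence}), and the structural correspondence between the Douglas--Rachford and ADMM iterations (Proposition \ref{t:pre convergence}). Concretely, I would first establish linear convergence of the primal Douglas--Rachford sequence $(x^k)$ to $\xbar$ by checking the hypotheses of Corollary \ref{t:polyhedral convergence}, and then transport this convergence to the dual solution $\bbar$ and to the ADMM sequences $(b^k,v^k)$ and $(u^k)$ via Proposition \ref{t:pre convergence} together with nonexpansiveness of the resolvent $\Jcal_{\eta D}$.

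For the first step I would verify the two structural hypotheses of Corollary \ref{t:polyhedral convergence}. Polyhedrality of $T$ is immediate from Proposition \ref{t:polyhedral subdiff}, since $J$ and $H$ are proper, lsc, convex and piecewise linear-quadratic and $B$, $D$ are given by \eqref{e:BnD}. Quasi-firm nonexpansiveness follows exactly as in the proof of Theorem \ref{t:convergence}\eqref{t:convergence_ii}: because $B$ and $D$ are maximally monotone (being subdifferentials of proper lsc convex functions), the reflectors $R_{\eta B}$ and $R_{\eta D}$ are nonexpansive, their composition $R_{\eta B}R_{\eta D}$ is nonexpansive, and hence $T=\tfrac12(R_{\eta B}R_{\eta D}+\Id)$ is firmly nonexpansive, in particular quasi-firmly nonexpansive on $W$. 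The remaining hypotheses, namely $\map{T}{W}{W}$ and that $\Fix T\cap W$ is the isolated point $\{\xbar\}$, are assumed outright.

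With these in hand, Corollary \ref{t:polyhedral convergence} supplies a neighborhood $\Ocal$ of $\xbar$ and a metric-subregularity constant $c$ for $\Id-T$ at $\xbar$ (which is automatic for polyhedral $T$ by Proposition \ref{t:polyhedrality-strong msr}) such that the iteration $x^{k+1}\in Tx^k$ converges linearly to $\xbar$ with rate $\sqrt{1-\kappa}$, $\kappa=c^{-2}$, for every $x^0\in\Ocal\cap W$. This already delivers the asserted rate and the neighborhood in the statement.

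Finally I would transfer this convergence. The standing assumption that $0\in(B+D)(x)$ has a solution lets me invoke Proposition \ref{t:pre convergence}, which identifies the limit: $\bbar=\Jcal_{\eta D}\xbar$ solves \eqref{e:Dp}, and $\vbar=\tfrac1\eta(\xbar-\bbar)\in D\bbar$. The linear rate passes to the ADMM variables essentially for free: since $b^k=\Jcal_{\eta D}x^k$ and $\Jcal_{\eta D}$ is nonexpansive, $\norm{b^k-\bbar}\leq\norm{x^k-\xbar}$, and since $v^k=\tfrac1\eta(x^k-b^k)$ the triangle inequality gives the same geometric rate for $(v^k)$ up to a constant. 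The convergence of $(u^k)$ to a solution of \eqref{e:Pp} is then precisely the conclusion of Proposition \ref{t:pre convergence}. The one point I expect to require care is exactly here: the ADMM correspondence and the recovery of $u^k$ from the relation $Au^{k+1}=v^{k+1}$ rely on $A$ having full column rank (injectivity), so this stronger hypothesis on $A$---rather than mere linearity---is what the ADMM portion of the conclusion silently borrows from Proposition \ref{t:pre convergence}. Everything else is routine bookkeeping, since the analytic content was already discharged in the earlier results.
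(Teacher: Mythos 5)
Your proof is correct and follows essentially the same route as the paper's, which simply invokes Proposition \ref{t:polyhedral subdiff} and Corollary \ref{t:polyhedral convergence} for the Douglas--Rachford sequence and then notes that the ADMM statements follow as in Theorem \ref{t:convergence}. You in fact supply two details the paper leaves implicit: the verification that $T$ is quasi-firmly nonexpansive (needed to apply Corollary \ref{t:polyhedral convergence}), and the observation that the ADMM portion of the conclusion silently requires $A$ to be injective (full column rank), as in Proposition \ref{t:pre convergence} and Theorem \ref{t:convergence}, even though the theorem's hypothesis only states that $A$ is linear.
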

\begin{proof}
   By Proposition \ref{t:polyhedral subdiff} the Douglas--Rachford operator $T$ is polyhedral and thus 
   the first statement follows from Corollary \ref{t:polyhedral convergence}.    The statement about 
   the sequences generated by Algorithm \ref{a:ADMM} follows as in Theorem \ref{t:convergence}. 
\end{proof}

\section{Error Bounds and Iterative Penalization}\label{meta-alg}
In this section, we study an iteratively regularized algorithmic scheme for solving
the problems of the form 
\[
 \min\set{J(u)}{u\in  U~\mbox{ and }f_j(Au)\leq \epsilon_j, ~j=1,2,\dots,M},   
\]
where $J: U \rightarrow \extre$ is proper lsc and 
convex, the mapping $A:U\rightarrow V$ is linear, for all $j$ the nonnegative-valued function
$\map{f_j}{V}{\Rp}$ is convex and smooth (at least at points that matter) and $\epsilon_j>0$. 
We refer to the inequality constraints as {\em structured constraints}.  It will
be convenient to introduce the following notation that will help to reduce clutter.  We collect the 
constraints into a vector-valued function so that we can write the problem as
\begin{equation*}\tag{\ensuremath{\Pcal}}\label{e:exact}
\cmin{J(u)}{u\in  U}{F_\epsilon(Au)\leq 0,}
\end{equation*}
where
\begin{equation}\label{e:Fepsilon}
 \map{F_\epsilon}{V}{\Rbb^M}\equiv v\mapsto \left(f_1(v)-\epsilon_1, f_2(v)-\epsilon_2, \dots, f_M(v)-\epsilon_M\right)^T.  
\end{equation}
Here the vector inequality is understood as holding element-wise.  

A common approach to solving problems of the type \eqref{e:exact} arising from inverse problems is
to apply {\em implicitly} the structured constraint by adding some (usually smooth) quantification 
of the constraint violation into the objective function:
\begin{equation*}\tag{\Pr}
\label{e:implicit}
\ucmin{J(u)+\rho\theta(F_\epsilon(Au))}{u\in U},    
\end{equation*}
where $\map{\theta}{\Rbb^M}{\extre}$ is a proper, lsc convex function and $\rho>0$.  
This places us in the context of the previous section since problem \eqref{e:implicit} is the 
specialization of \eqref{e:Pp} with $H(Au)=\rho\theta(F_\epsilon(Au))$.

As is often seen in the inverse problems literature, the constraint violation parameter 
$\epsilon_j=0$ ($j=1,\dots,M$), essentially penalizing divergence from the origin.
A prominent instance of this form of regularization is the squared norm:
$
   \theta(v)\equiv\|v\|^2. 
$
There are many efficient methods available for 
solving \eqref{e:implicit}. It is clear that for a certain value of $\rho$ 
the optimal solution to \eqref{e:implicit}, $u_\rho$, will satisfy
$f_j(Au_\rho)\leq \epsilonbar_j(\rho)$ with the {\em effective} error $\epsilonbar_j(\rho)$ depending 
on $\rho$. What is {\em not} true in general, however, is that the solution to  \eqref{e:implicit} corresponds
to the solution to \eqref{e:exact} for the constraint error $\epsilonbar(\rho)$.  Moreover, for 
our intended applications, $U$ is a finite dimensional Euclidean space with dimension $n$ and
the dimensionality of the constraints $M$ grows superlinearly as a function of $n$, so we would like to 
consolidate the constraints somehow while exploiting the phenomenon that, at the solution to \eqref{e:exact}
relatively few of the constraints are in fact tight or {\em active}.

We consider convex penalties that reduce the dimensionality of the constraint structure and 
have the property that $\theta(F_\epsilon(Au))=0$ if and only if $F_\epsilon(A u)\leq 0$.    
Of particular interest among penalties with this property are {\em exact penalties}, that is 
penalties $\theta$ with the property that solutions to \eqref{e:implicit} correspond to solutions to 
\eqref{e:exact} for all values of $\rho$ beyond a certain threshold $\rhobar$. 
For more background on exact penalization see, for example, 
\cite{Bert82, Fletcher85, Burke91, Mangasarian79, Mangasarian85, Conn2000}. 
We point also to Friedlander and Tseng \cite{FriedlanderTseng07} for a connection between exact penalization 
and what they call {\em exact regularization} as this fits well with our viewpoint that the structured constraints 
$F_\epsilon(Au)\leq 0$
constitute a regularization of the {\em model} with regularization parameter $\epsilon$.
This illustrates the distinction between {\em model-based} regularization, that is, regularization of the 
constraints motivated by external (eg. statistical) considerations, versus {\em numerical} regularization 
motivated solely on the grounds of enabling efficient (approximate) numerical solutions to \eqref{e:exact}.  

While it is nice to know that, with exact penalization, one can achieve an exact correspondence between the original constrained
optimization problem and the penalized problem, the whole point of relaxing the constraints is to 
reduce the computational burden of strictly enforcing the constraints.  As is often done in practice, one gradually strengthens the 
constraints, finding intermediate points that nearly solve the relaxed problem and using these as starting points 
for solving a more strictly penalized problem.    Together with Theorem \ref{t:dist to Sinf} below, the linear convergence rate 
established in Theorems \ref{t:convergence} and \ref{t:convergence-polyhedrality} of the previous 
section yield estimates on the distance
of intermediate points to the solution set of the relaxed problem as well as estimates on the distance 
to feasibility for the unrelaxed problem. 

\subsection{Structured Constraints and penalization}

Define 
\begin{equation}\label{e:Ccal}
\Ccal\equiv \set{u\in U}{F_\epsilon(Au)\leq 0}.
\end{equation} 
This is a closed convex set since the $f_j$ are lsc and convex. If there exists some $\alpha\in \Rbb$ 
such that $\Ccal \cap \lev_{\leq \alpha}J$ 
is nonempty and bounded 
then \eqref{e:exact} has a solution \cite[Theorem 11.9]{BauschkeCombettes11}.   
This will happen, for instance, if $\dom(J)\cap \Ccal\neq\emptyset$ and $J$ is {\em coercive} \cite[Proposition 11.12]{BauschkeCombettes11}, 
that is $J$ satisfies
\begin{equation}\label{e:coercive-j}
\lim_{\|u\|\toinf}J(u)=+\infty.
\end{equation}
Such assumptions are naturally satisfied in many applications.  Moreover, $\lev_{\leq \alphabar}J(u)$, 
the lower level-set of $J$ corresponding to the optimal value $\alphabar$ in \eqref{e:exact}, 
is convex and so the set of optimal solutions to \eqref{e:exact} is also convex.  
Define $J_\rho\equiv J+\rho \theta(F_\epsilon\circ A)$ for the convex, lsc function $\theta$ satisfying $\theta(w)\geq 0$ 
for all $w$ and  $\theta(w)=0$ if and only if $F_{\epsilon}(w)\leq 0$.  Then $J_\rho$ is convex, lsc and 
corresponds exactly to $J$ on the set $\Ccal$.  Otherwise $J_\rho$ increases pointwise to $+\infty$ at points 
outside $\Ccal$ as $\rho\to\infty$.   For $\paren{\rho_k}_{k\in\Nbb}$ with $\rho_k\to\infty$,  the sequence of functions 
$\paren{J_{\rho_k}}$ epi-converges (see \cite[Definition 7.1]{VA}) to $J+\iota_{\Ccal}$ as $k\to+\infty$ where $\iota_\Ccal$ 
is the indicator function of the set $\Ccal$. 
As we will allow {\em approximate} solution of problems \eqref{e:implicit} it will be helpful to recall the 
set of $\gamma$-minimizers: $\gamma-\argmin J_\rho\equiv\set{u}{J_\rho(u)\leq \inf J_\rho +\gamma}$. 
The relation between the solution sets to \eqref{e:exact} and \eqref{e:implicit} is detailed in the following, which is a direct 
application of \cite[Theorem 7.33]{VA}.
\begin{propn}\label{t:epi-convergence}
   Let $J: U \rightarrow \extre$,  $\map{F_\epsilon}{V}{\Rbb^M}$ and $\map{\theta}{\Rbb^M}{\Rbb}$ be proper, lsc and 
convex, and let $A:U\rightarrow V$ be linear.  Let $J$ be coercive with $\dom J\cap \Ccal\neq \emptyset$ for $\Ccal$ 
defined by \eqref{e:Ccal}.  Suppose further that $\theta(w)\geq 0$ and that 
$\theta(w)=0$ if and only if $F_{\epsilon}(w)\leq 0$.  Define $J_{\rho_k}\equiv J+\rho_k \theta(F_\epsilon\circ A)$
where $\rho_k\nearrow +\infty$ as $k\nearrow+\infty$.  Then $\inf J_{\rho_k}\to \inf J+\iota_\Ccal<+\infty$.  Moreover, 
for any sequence of errors $\gamma_k\searrow 0$ and corresponding points $u^k\in \gamma_k-\argmin J_{\rho_k}$, 
the sequence $\paren{u^k}_{k\in \Nbb}$ is bounded, and all its cluster points belong to $\argmin\{J+\iota_\Ccal\}$. 
\end{propn}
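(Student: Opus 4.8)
The plan is to verify the three hypotheses of the epi-convergence/minimization theorem \cite[Theorem 7.33]{VA} for the sequence $\paren{J_{\rho_k}}$ with limit $J+\iota_\Ccal$, and then read off its conclusions. Concretely I must check that the functions involved are proper and lsc, that $J_{\rho_k}$ epi-converges to $J+\iota_\Ccal$, and that $\paren{J_{\rho_k}}$ is eventually level-bounded; the middle point carries the real content and the other two are essentially bookkeeping.

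First I would record lower semicontinuity. Since $A$ is linear and each component of $F_\epsilon$ is a finite convex function, $F_\epsilon\circ A$ is continuous, and since $\theta$ is a finite convex function on $\Rbb^M$ it is continuous as well; hence $u\mapsto\theta(F_\epsilon(Au))$ is continuous and $J_{\rho_k}=J+\rho_k\,\theta(F_\epsilon\circ A)$ is lsc as the sum of the lsc function $J$ with a continuous one. The set $\Ccal$ is closed (a sublevel set of the continuous map $F_\epsilon\circ A$), so $\iota_\Ccal$ and therefore $J+\iota_\Ccal$ are lsc, and properness of $J+\iota_\Ccal$ follows from $\dom J\cap\Ccal\neq\emptyset$.

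The epi-convergence is where the hypotheses $\theta\geq 0$, $\theta(w)=0\iff F_\epsilon(w)\leq 0$, and $\rho_k\nearrow+\infty$ enter. Because $\theta\geq 0$ and $\rho_k$ increases, $\paren{J_{\rho_k}}$ is nondecreasing in $k$ at each fixed $u$, and its pointwise supremum is exactly $J+\iota_\Ccal$: on $\Ccal$ the penalty vanishes for every $k$, while off $\Ccal$ one has $\theta(F_\epsilon(Au))>0$, so $\rho_k\theta(F_\epsilon(Au))\to+\infty$. For a nondecreasing sequence of lsc functions whose pointwise supremum is itself lsc, epi-convergence to that supremum holds: the liminf inequality follows from $J_{\rho_k}\geq J_{\rho_m}$ for $k\geq m$ together with lower semicontinuity of $J_{\rho_m}$ and a supremum over $m$, while the constant sequence $u_k\equiv u$ serves as a recovery sequence. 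This is precisely the monotone case of epi-convergence in \cite{VA}.

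For eventual level-boundedness I would invoke coercivity of $J$ directly: since $\theta\geq 0$ and $\rho_k>0$ we have $J_{\rho_k}\geq J$, so $\lev_{\leq\alpha}J_{\rho_k}\subset\lev_{\leq\alpha}J$ for every $k$ and every $\alpha$, and the right-hand set is bounded by \eqref{e:coercive-j}; thus the sequence is even uniformly level-bounded. With all hypotheses verified, \cite[Theorem 7.33]{VA} gives $\inf J_{\rho_k}\to\inf\klam{J+\iota_\Ccal}$, the limit being finite because $\dom J\cap\Ccal\neq\emptyset$ forces it $<+\infty$ while coercivity together with lower semicontinuity forces it $>-\infty$. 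The same theorem yields that any selection $u^k\in\gamma_k-\argmin J_{\rho_k}$ with $\gamma_k\searrow 0$ is bounded and that all its cluster points lie in $\argmin\klam{J+\iota_\Ccal}$. I do not expect a genuine obstacle; the only point needing care is that the hypotheses of \cite[Theorem 7.33]{VA} hold verbatim, and in particular that the combination $\theta\geq 0$ with $\theta(w)=0\iff F_\epsilon(w)\leq 0$ identifies the pointwise/epi-limit as $J+\iota_\Ccal$ rather than as a strictly larger lower semicontinuous hull.
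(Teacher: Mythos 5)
Your proposal is correct and follows essentially the same route as the paper, which likewise reduces everything to \cite[Theorem 7.33]{VA} after noting that the sign/vanishing properties of $\theta$ give epi-convergence of $J_{\rho_k}$ to $J+\iota_\Ccal$ and that coercivity of $J$ gives level-boundedness. Your write-up simply fills in the details the paper's sketch omits (the monotone-sequence argument for epi-convergence and the inclusion $\lev_{\leq\alpha}J_{\rho_k}\subset\lev_{\leq\alpha}J$), all of which are accurate.
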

{\em Proof sketch.}
The property of the convex penalty $\theta$ that $\theta(w)\geq 0$ and 
$\theta(w)=0$ if and only if $F_{\epsilon}(w)\leq 0$ yields epi-convergence of $J_{\rho_k}$ to $J+\iota_\Ccal$.  Coercivity
of $J$ guarantees that $J_\rho$ is level bounded for all values of $\rho>0$.  These two properties, together with 
lower semicontinuity and the fact that $J$ and $J_\rho$ are proper, are all that is needed to prove the result.      
\hfill $\Box$

If the regularization were {\em exact}, then we would know that for 
all parameter values $\rho$ large enough, the solutions to \eqref{e:implicit} coincide with solutions to \eqref{e:exact}.  We 
return to this later.  

\subsection{Solution to the regularized Subproblem and error bounds}
We now turn our attention to solution of the problem \eqref{e:implicit} for a fixed value of $\rho_k$. 
The ADMM algorithm 
discussed in Section \ref{s:siadpmm} is useful for solving this problem in the sense
that it has an error bound under specific assumptions which gives a stopping rule.  
This is not unique to Algorithm \ref{a:ADMM}, but we focus 
on this method due to its prevalence in practice. 

Recall the exact problem \eqref{e:exact}:
\begin{equation*}\tag{\ensuremath{\Pcal}}
\cmin{J(u)}{u\in  U}{F_\epsilon(Au)\leq 0.}
\end{equation*}
It will be convenient to rewrite the penalized problem\footnote{Of course, the value of the problem is not the same, 
but the solutions are.} 
\eqref{e:implicit} as 
\begin{equation*}\tag{\Pr}
\label{penalized-swap}
 \ucmin{\frac{1}{\rho}J(u)+\theta(F_\epsilon(Au))}{u\in U}.
\end{equation*}
Consider also the {\em limiting} problem
\begin{equation*}\tag{\Pinf}
\label{e:swap}
 \ucmin{\theta(F_\epsilon(Au))}{u\in U}.
\end{equation*}
We view problem \eqref{penalized-swap} as the 
regularized version of  \eqref{e:swap} with $J$ as the regularizing
functional and $\frac{1}{\rho}$ as the regularization parameter. 
Denote the solution sets to these problems by 
\begin{eqnarray}
   S&\equiv& \argmin\set{J(u)}{u\in U, ~F_\epsilon(Au)\leq 0},\nonumber\\
   S_\rho&\equiv& \argmin\set{\frac{1}{\rho}J(u)+ \theta\paren{F_\epsilon(Au)}}{u\in U},\nonumber\\
   S_\infty&\equiv& \argmin\set{\theta\paren{F_\epsilon(Au)}}{u\in U}.\nonumber
\end{eqnarray}

If the penalization $\theta$ satisfies $\theta(F_\epsilon(Au))=0$
if and only if $F_\epsilon(A u)\leq 0$, then it is immediately 
clear that $S_\infty$ corresponds to the feasible set of problem \eqref{e:exact} hence $S\subset S_\infty$. 
What is more remarkable is that, if a Lagrange multiplier for \eqref{e:exact} exists, then $S_\rho=S$ for all 
$\rho$ large enough, that is, the penalty $\theta$ is {\em exact}.  
\begin{thm}[Theorem 4.2 of \cite{FriedlanderTseng07}]\label{t:exact penalty}
 Suppose that $S$ is nonempty and compact,  and that there exist
 Lagrange multipliers $\lambda$ for \eqref{e:exact}. Let the penalization $\theta$ in 
\eqref{e:implicit} be convex. Assume, moreover, that $\theta$ satisfies the condition $\theta(F_\epsilon(Au))=0$
if and only if $F_\epsilon(A u)\leq 0$.  Then the solution set to the penalized problem, $S_\rho$, 
 coincides with the solution set to the exact problem, $S$,
for all $\rho>\theta^\circ(\lambda)$ where 
 $\theta^\circ$ is the polar function of $\theta$ given by $\theta^\circ(\lambda)=
 \sup_{x\nleq 0}\frac{\lambda^Tx}{\theta(x)}$.
\end{thm}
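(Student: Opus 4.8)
The plan is to establish the two set inclusions $S\subseteq S_\rho$ and $S_\rho\subseteq S$ separately for every $\rho>\theta^\circ(\lambda)$, using the Lagrange multiplier to trade the hard constraint $F_\epsilon(Au)\le 0$ against the penalty $\theta$. The starting point is to unpack what the existence of Lagrange multipliers provides: there is a multiplier $\lambda\ge 0$ in $\Rbb^M$ and a point $\ubar\in S$ such that $\ubar$ minimizes the Lagrangian $u\mapsto J(u)+\lambda^T F_\epsilon(Au)$ over $U$, such that $F_\epsilon(A\ubar)\le 0$, and such that complementary slackness $\lambda^T F_\epsilon(A\ubar)=0$ holds. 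Combining the latter two facts with minimality yields the single working inequality
\[
   J(u)+\lambda^T F_\epsilon(Au)\ge J(\ubar)\qquad\text{for all }u\in U.
\]
Alongside this I would record the elementary consequence of the polar: writing $w\equiv F_\epsilon(Au)$, the iff-property of $\theta$ forces $\theta(w)>0$ whenever $w\nleq 0$, so $\lambda^T w\le\theta^\circ(\lambda)\,\theta(w)$ follows directly from $\theta^\circ(\lambda)=\sup_{x\nleq 0}\lambda^T x/\theta(x)$, while for $w\le 0$ one has $\lambda^T w\le 0$ and $\theta(w)=0$.

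For the inclusion $S\subseteq S_\rho$, I would fix $\ubar\in S$, so the penalized objective at $\ubar$ equals $\tfrac1\rho J(\ubar)$ because $\theta(F_\epsilon(A\ubar))=0$, and then show that no $u$ does strictly better. For arbitrary $u$, the working inequality gives $\tfrac1\rho J(u)\ge\tfrac1\rho J(\ubar)-\tfrac1\rho\lambda^T w$. In the feasible case $w\le 0$ the term $-\tfrac1\rho\lambda^T w\ge 0$ and $\theta(w)=0$, so $\tfrac1\rho J(u)+\theta(w)\ge\tfrac1\rho J(\ubar)$ immediately. In the infeasible case $w\nleq 0$ I would insert the polar bound to obtain
\[
   \tfrac1\rho J(u)+\theta(w)\ge\tfrac1\rho J(\ubar)+\Bigl(1-\tfrac{\theta^\circ(\lambda)}{\rho}\Bigr)\theta(w),
\]
and here the coefficient $1-\theta^\circ(\lambda)/\rho$ is strictly positive precisely because $\rho>\theta^\circ(\lambda)$, while $\theta(w)\ge 0$; hence the right-hand side is again at least $\tfrac1\rho J(\ubar)$. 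Thus $\ubar$ minimizes the penalized objective, so $\ubar\in S_\rho$, and since every element of $S$ shares the optimal value $J(\ubar)$, the whole set $S$ lies in $S_\rho$.

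For the reverse inclusion $S_\rho\subseteq S$, I would use that the previous step identifies the penalized minimum value as $\tfrac1\rho J(\ubar)$, attained at $\ubar\in S$. Take any $u_\rho\in S_\rho$ and set $w_\rho\equiv F_\epsilon(Au_\rho)$. First I would prove feasibility: if $w_\rho\nleq 0$, then evaluating the second displayed inequality at $u_\rho$ and using that its penalized value equals $\tfrac1\rho J(\ubar)$ forces $\bigl(1-\theta^\circ(\lambda)/\rho\bigr)\theta(w_\rho)\le 0$; since the coefficient is strictly positive and $\theta(w_\rho)>0$ for $w_\rho\nleq 0$, this is a contradiction, so $w_\rho\le 0$. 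Feasibility then gives $\theta(w_\rho)=0$, whence the penalized value collapses to $\tfrac1\rho J(u_\rho)=\tfrac1\rho J(\ubar)$, i.e.\ $J(u_\rho)=J(\ubar)$; a feasible point attaining the optimal value of $J$ on $\Ccal$ belongs to $S$. Combining the two inclusions gives $S_\rho=S$.

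The step I expect to require the most care is the interface between the polar bound and the threshold $\rho>\theta^\circ(\lambda)$: the same strict inequality both preserves optimality of $\ubar$ in the forward direction and forces feasibility of any penalized minimizer in the reverse direction, and one must verify that the iff-property $\theta(w)=0\iff w\le 0$ is exactly what licenses the strictness $\theta(w_\rho)>0$ in the infeasible case. The compactness of $S$, beyond nonemptiness, serves only to guarantee that a multiplier-admitting solution $\ubar$ exists and that $J(\ubar)$ is well defined and common across $S$, so I would not expect it to enter the core estimate.
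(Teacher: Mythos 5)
Your proof is correct; the main thing to point out is that there is no internal proof in the paper to compare it against, since the statement is imported verbatim as Theorem 4.2 of Friedlander and Tseng \cite{FriedlanderTseng07} and is never proved there. What you have written is the classical, self-contained exact-penalty argument: the Lagrangian inequality $J(u)+\lambda^T F_\epsilon(Au)\ge J(\ubar)$ for all $u\in U$, combined with the polar bound $\lambda^T w\le \theta^\circ(\lambda)\,\theta(w)$ for $w\nleq 0$, yields
\[
\tfrac1\rho J(u)+\theta\paren{F_\epsilon(Au)}\ \ge\ \tfrac1\rho J(\ubar)+\paren{1-\tfrac{\theta^\circ(\lambda)}{\rho}}\theta\paren{F_\epsilon(Au)}
\quad\text{for all } u\in U,
\]
and the single threshold $\rho>\theta^\circ(\lambda)$ makes the last coefficient strictly positive, which simultaneously certifies $S\subseteq S_\rho$ and forces feasibility (hence membership in $S$) of every point of $S_\rho$; whether or not this mirrors the proof in \cite{FriedlanderTseng07}, it stands on its own. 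Three points to tighten: (i) your claim that $\theta(w)>0$ whenever $w\nleq 0$ uses \emph{both} the iff-condition and the standing nonnegativity $\theta\ge 0$, which the paper assumes only in the text preceding Proposition \ref{t:epi-convergence} rather than in the theorem statement --- make it an explicit hypothesis, since the very definition of $\theta^\circ$ (division by $\theta(x)$) and your strict contradiction in the reverse inclusion both depend on it; (ii) note that $\theta^\circ(\lambda)\ge 0$ always (take $x$ a coordinate vector, so $x\nleq 0$ and $\lambda^T x\ge 0$ since $\lambda\ge 0$), hence $\rho>\theta^\circ(\lambda)$ does give $\theta^\circ(\lambda)/\rho<1$ as you use, while if $\theta^\circ(\lambda)=+\infty$ the statement is vacuous; (iii) your closing observation is accurate and worth keeping: compactness of $S$ enters nowhere in the estimate --- only nonemptiness of $S$ and existence of the multiplier are used --- so your argument in fact establishes a marginally stronger statement than the one quoted.
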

It is easy to check whether a solution $u_\rho\in S_\rho$ is in fact feasible for \eqref{e:exact} (and 
hence also in $S$) by simply evaluating the value of $\theta\paren{F_q(Au_\rho)}$.  
More generally, one would check whether  the first order optimality conditions for \eqref{e:swap}
are satisfied at $u_\rho$, namely
\begin{equation}\label{e:foc swap}
 0\overset{?}{\in}\sd\theta\paren{F_\epsilon\circ A(\cdot)}\mbox{ at $u_\rho$}.
 \end{equation}
An explicit formula for the subdifferential in \eqref{e:foc swap} for image denoising and 
deconvolution is given in Section \ref{s:application} as this will be needed for computing 
Step \eqref{ADMM2} of Algorithm \ref{a:ADMM}. 

If, in addition, $S_\infty$ is {\em weakly sharp} (see Definition \ref{d:weak sharp}), then one can obtain 
an upper bound for the distance of solutions to \eqref{e:implicit} to {\em feasible} solutions to \eqref{e:exact}, even 
in the absence of Lagrange multipliers for \eqref{e:exact}.   

\begin{assumption}\label{a:assumption-FreidlanderTseng}$~$
 \begin{enumerate}[(i)]
  \item\label{a:assumption-FreidlanderTseng 1} The solution set $S_\infty$ of problem \eqref{e:swap} is nonempty.
  \item\label{a:assumption-FreidlanderTseng 2} $\lev_{\leq \alpha}J$ is bounded for each $\alpha \in \mathbb{R}$ and $\inf_{x\in U}>-\infty$. 
%   (holds if $J$ is coercive)
  \item\label{a:assumption-FreidlanderTseng 3} The solution set $S_\infty$ of \eqref{e:swap} is weakly sharp of order $\nu\geq 1$.
 \end{enumerate}
\end{assumption}
\begin{thm}\label{t:dist to Sinf}
Suppose Assumption \ref{a:assumption-FreidlanderTseng}\eqref{a:assumption-FreidlanderTseng 1}-\eqref{a:assumption-FreidlanderTseng 2} hold. 
\begin{enumerate}[(i)]
   \item\label{t:dist to Sinf i} For any $\rhobar>0$, $\bigcup_{\rho\geq\rhobar}S_\rho$ is bounded.  
   \item\label{t:dist to Sinf ii} If, in addition,  Assumption \ref{a:assumption-FreidlanderTseng}\eqref{a:assumption-FreidlanderTseng 3} 
holds with modulus
  of sharpness $\nu$, then for any $\rhobar>0$ there exists $\tau>0$ such that
  \begin{equation}\label{e:dist to Sinf}
   dist(u_\rho, S_\infty)^{\nu-1}\leq \frac{\tau}{\rho}, ~~\forall u_\rho\in S_\rho, ~~\rho\geq \rhobar.
  \end{equation}
   \item\label{t:dist to Sinf iii} If, in addition,  Assumption \ref{a:assumption-FreidlanderTseng}\eqref{a:assumption-FreidlanderTseng 3} holds
and the penalization $\theta$ is exact, then for all $\rho$ large enough, $u_\rho\in S$ and $dist(u_\rho, S_\infty)=dist(u_\rho, S)=0$.
\end{enumerate}
\end{thm}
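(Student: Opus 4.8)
The plan is to run everything off the single ``value inequality'' that expresses optimality of $u_\rho$ for the penalized objective $\tfrac{1}{\rho}J+\theta(F_\epsilon\circ A)$: for part \eqref{t:dist to Sinf i} I would combine it with level-boundedness of $J$; for part \eqref{t:dist to Sinf ii} with weak sharpness and local Lipschitz continuity of $J$; and for part \eqref{t:dist to Sinf iii} with the exact-penalty theorem. Throughout I fix a feasible reference point $w\in S_\infty\cap\dom J$ (it exists under the standing requirement $\dom J\cap\Ccal\neq\emptyset$ of Proposition \ref{t:epi-convergence}); recall that $S_\infty$ coincides with the feasible set $\Ccal$, that $w$ attains the outer minimum value $\pbar:=\inf_u\theta(F_\epsilon(Au))$, and that $S\subset S_\infty$. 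For \eqref{t:dist to Sinf i}, optimality of any $u_\rho\in S_\rho$ tested against $w$ gives
\begin{equation*}
\tfrac{1}{\rho}J(u_\rho)+\theta(F_\epsilon(Au_\rho))\leq \tfrac{1}{\rho}J(w)+\pbar ,
\end{equation*}
and since $\theta(F_\epsilon(Au_\rho))\geq\pbar$ the penalty terms cancel on the dominant side, leaving $J(u_\rho)\leq J(w)$. Hence $\bigcup_{\rho\geq\rhobar}S_\rho\subset\lev_{\leq J(w)}J$, which is bounded by Assumption \ref{a:assumption-FreidlanderTseng}\eqref{a:assumption-FreidlanderTseng 2} (the bound actually holds for every $\rho>0$).

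For \eqref{t:dist to Sinf ii} I would sharpen the comparison by testing optimality against the projection $P_{S_\infty}u_\rho$ of $u_\rho$ onto the closed convex set $S_\infty$, so that $\|u_\rho-P_{S_\infty}u_\rho\|=\dist(u_\rho,S_\infty)$ and $P_{S_\infty}u_\rho$ realizes the value $\pbar$. This yields
\begin{equation*}
\theta(F_\epsilon(Au_\rho))-\pbar\leq \tfrac{1}{\rho}\left(J(P_{S_\infty}u_\rho)-J(u_\rho)\right).
\end{equation*}
By \eqref{t:dist to Sinf i} all the points $u_\rho$, and hence their projections, lie in one bounded set on which $J$ is Lipschitz with some constant $L$, so $J(P_{S_\infty}u_\rho)-J(u_\rho)\leq L\,\dist(u_\rho,S_\infty)$, while weak sharpness of order $\nu$ (Assumption \ref{a:assumption-FreidlanderTseng}\eqref{a:assumption-FreidlanderTseng 3}) gives $\theta(F_\epsilon(Au_\rho))-\pbar\geq\alpha\,\dist(u_\rho,S_\infty)^\nu$. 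Chaining the two,
\begin{equation*}
\alpha\,\dist(u_\rho,S_\infty)^\nu\leq \tfrac{L}{\rho}\,\dist(u_\rho,S_\infty),
\end{equation*}
and dividing by $\dist(u_\rho,S_\infty)$ (the case where it vanishes being trivial) produces $\dist(u_\rho,S_\infty)^{\nu-1}\leq \tfrac{L}{\alpha\rho}$, which is the claim with $\tau=L/\alpha$.

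For \eqref{t:dist to Sinf iii}, when $\theta$ is exact, Theorem \ref{t:exact penalty} gives $S_\rho=S$ for all $\rho$ beyond a threshold, so $u_\rho\in S$; since every constrained minimizer is feasible we have $S\subset S_\infty$, whence $u_\rho\in S_\infty$ and both $\dist(u_\rho,S)$ and $\dist(u_\rho,S_\infty)$ vanish.

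I expect the Lipschitz step in \eqref{t:dist to Sinf ii} to be the main obstacle: passing from the value inequality to $J(P_{S_\infty}u_\rho)-J(u_\rho)\leq L\,\dist(u_\rho,S_\infty)$ is exactly what upgrades the naive exponent $\nu$ (all one gets by testing against a fixed feasible point $w$) to the sharp exponent $\nu-1$. The argument needs the bounded region containing the $u_\rho$ and their projections to sit inside the interior of $\dom J$ so that the finite convex function $J$ is genuinely Lipschitz there — automatic when $J$ is finite-valued, and the one place care is required otherwise; equivalently one uses a uniform bound on the subgradients of $J$ at the projection points. Everything else is bookkeeping around the single optimality inequality.
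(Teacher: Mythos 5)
Your proof is correct, and it differs from the paper's only in being self-contained: the paper disposes of parts (\ref{t:dist to Sinf i})--(\ref{t:dist to Sinf ii}) in one line by citing Theorem 5.1 of \cite{FriedlanderTseng07} under the dictionary (objective $\theta(F_\epsilon(A\,\cdot))$, regularizer $J$, regularization parameter $1/\rho$), and it proves (\ref{t:dist to Sinf iii}) exactly as you do, directly from exactness. Your argument is in substance a reconstruction of the cited Friedlander--Tseng proof: the value inequality tested against a fixed feasible point gives (i), and tested against $P_{S_\infty}u_\rho$, combined with weak sharpness and a Lipschitz bound on the regularizer, gives (ii). What the self-contained route buys is that it surfaces a hypothesis the citation silently imports: in \cite{FriedlanderTseng07} the regularizer is finite-valued, hence Lipschitz on bounded sets, while in this paper $J$ may be extended-real-valued, so the caveat you flag is not cosmetic but is actually necessary for the theorem to hold as stated. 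Indeed, in $\Rtw$ take $A=\Id$, $f_1(x,y)=y^2$, $\epsilon_1=0$ and $\theta(w)=\max\{w,0\}$, so that $\theta(F_\epsilon(A(x,y)))=y^2$, $S_\infty$ is the $x$-axis and Assumption \ref{a:assumption-FreidlanderTseng}\eqref{a:assumption-FreidlanderTseng 1}, \eqref{a:assumption-FreidlanderTseng 3} hold with $\nu=2$; take $J(x,y)=x^2+y^2+\iota_C(x,y)$ for $C=\set{(x,y)}{y\geq e^{-x}}$, which satisfies Assumption \ref{a:assumption-FreidlanderTseng}\eqref{a:assumption-FreidlanderTseng 2}. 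The minimizers $u_\rho$ then ride the boundary of $C$ and satisfy $\dist(u_\rho,S_\infty)\sim\sqrt{\log\rho/(2\rho)}$, violating \eqref{e:dist to Sinf}; the projections $P_{S_\infty}u_\rho$ lie outside $\dom J$, which is precisely where your Lipschitz step (and the theorem) breaks. So the finiteness/local Lipschitz requirement on $J$ near $S_\infty$ that you isolate is exactly the missing ingredient, and it does hold for the paper's applications \eqref{e:smre1-s} and \eqref{e:smre1}. Two small repairs to your write-up: in (iii) you should not route through Theorem \ref{t:exact penalty} --- by the paper's definition, ``$\theta$ is exact'' already \emph{means} $S_\rho=S$ for all $\rho$ beyond a threshold, whereas that theorem gives sufficient conditions for exactness and would import hypotheses (compactness of $S$, existence of multipliers) not assumed in (iii); and in (i) your reference point $w\in S_\infty\cap\dom J$ requires the blanket condition $\dom J\cap\Ccal\neq\emptyset$ together with $S_\infty=\Ccal$, which is worth stating explicitly since Assumption \ref{a:assumption-FreidlanderTseng}\eqref{a:assumption-FreidlanderTseng 1} alone does not provide it.
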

\begin{proof}
\eqref{t:dist to Sinf i} and \eqref{t:dist to Sinf ii}.   
Under the assumption \ref{a:assumption-FreidlanderTseng}, Theorem 5.1 in \cite{FriedlanderTseng07} directly applies to yield
the result. \hfill$\triangle$

\eqref{t:dist to Sinf iii}.
If the penalization $\theta$ is exact, then  $\theta(F_\epsilon(Au))=0$ if and only if $F_\epsilon(A u)\leq 0$, hence 
$S=S_\rho$ for all $\rho$ large enough, and $S_\infty$ corresponds exactly to the feasible set in \eqref{e:exact}.
\end{proof}

\begin{remark} The error bound \eqref{e:dist to Sinf} holds independent of the existence of Lagrange multipliers for \eqref{e:exact}, 
hence, for exact penalization under Assumption \ref{a:assumption-FreidlanderTseng},  
Theorem \ref{t:dist to Sinf} yields an upper bound on the distance of solutions to \eqref{e:implicit} to feasible points for \eqref{e:exact}.    
\end{remark}

\section{Application:  image deconvolution and denoising with statistical multiscale analysis}
\label{s:application}

We specialize the above results to the application of optimization with statistical multiscale 
side constraints.  All of the examples considered in this section satisfy the requirements
of Theorem \ref{t:convergence-polyhedrality}, and thus for each fixed value of the 
penalty parameter $\rho$ local linear convergence to a solution of \eqref{e:implicit} is guaranteed. 
Moreover, the penalty function $\theta$ that we use is {\em exact} and hence by 
Theorem \ref{t:exact penalty}, for $\rho$ large enough, the computed solution to \eqref{e:implicit}
is also a solution to \eqref{e:exact}.  What is not known {\em a priori} is what value of $\rho$ yields
the correspondence.  Moreover, since the whole point of the relaxation \eqref{e:implicit} is 
to remove the burden of satisfying the constraints, we approach a solution to \eqref{e:exact}
via a sequence of solutions to \eqref{e:implicit} for progressively larger values of $\rho$.
This is described precisely in the following sequentially penalized algorithm. 
\begin{center}
\fbox{%
        \addtolength{\linewidth}{-2\fboxsep}%
        \addtolength{\linewidth}{-2\fboxrule}%
    \begin{minipage}{\linewidth}%
	\begin{algorithm}[{\bf Exactly Penalized Sequential ADMM}] \label{a:DADMM}$~$\\
% \dontprintsemicolon
{\bf Initialization}. Given an image $y$,  
a sequence of error tolerances $(\gamma_k)_{k \in \mathbb{N}}$ with $0\leq \gamma_k \to 0$ 
Choose parameters: $\beta>1$ and the penalty parameter $\eta\in (0,2)$.
Initialize $ k=i=0$, $b^{(0, 0)}=0, v^0=y$, $u^{(0, 0)}=A^{T}y$,
and  compute 
$u^{(0, 1)}=\argmin_{u}\left\{J(u)+\langle b^{(0, 0)}, Au\rangle +
\tfrac{\eta}{2}\|Au-v^{(0, 0)}\|^2 +\tfrac{1}{2}\|u-u^{(0, 0)}\|^2\right\}$.\\

{\bf For $k=0,1,2,\dots$}
\begin{itemize}
\item {\bf While $\|u^{(k, i+1)}-u^{(k, i)}\|>\gamma_k$}
\begin{itemize}
\item Compute $(v^{(k, i+1)}, b^{(k, i+1)})$ via Algorithm \ref{a:ADMM} steps \eqref{ADMM2}-\eqref{ADMM3}
with $H\equiv \rho_k\theta\paren{F_q\paren{\cdot}}$ for the {\em exact} penalty $\theta$ and structured constraints 
$F_q$.
\item Increment $i=i+1$ and calculate $u^{(k, i+1)}$ via Algorithm \ref{a:ADMM} step \eqref{ADMM1}.
\end{itemize}
\item {\bf Update/reset:}  Set $u^{(k+1, 1)}:=u^{(k, i+1)}$ and $\rho_{k+1}=\beta\rho_{k}$.  Set $k=k+1$ and  $i=0$.
If $\theta\paren{F_q\paren{u^{(k, 1)}}}=0$, set $\gamma_k=0$.\\
\end{itemize}
% \caption{Dynamically penalized sequential inexact ADPMM\label{alg}}
\end{algorithm}
\end{minipage}
}
\end{center}
The outer iteration, indexed by $k$, consists of numerical approximations to solutions of 
\eqref{e:implicit} for the penalty parameter $\rho_k$. The the inner iteration proceeds with 
the current value of $\rho_k$ until
the step size between successive iterates $u^{k,i+1}$ and $u^{k,i}$ drops in a linear 
fashion below a given tolerance $\gamma_k$.  From Theorem \ref{t:convergence-polyhedrality}
one can then obtain a posteriori estimates on the distance of the iterate $u^{k+1,j}$
to the true solution. Then $\rho_k$ is increased by a constant factor.  
Since, for this model the penalization $\theta$ is exact, once the constraints appear to be satisfied 
(as determined by monitoring the value of $\theta\paren{F_q\paren{v^k}}$), 
it is reasonable to conclude that the correspondence between problems \eqref{e:implicit} and 
\eqref{e:exact} holds, and 
the penalty $\rho_k$ no longer 
needs to be updated; the inner loop of the algorithm then can be run to the desired 
accuracy.  As indicated in Figures \ref{Fig:s-data}(b) and \ref{Fig:performance}, 
the constraints appear to be satisfied when the penalty term $\rho_k\theta\paren{F_q\paren{v^k}}$ 
 (green plot) drops  suddenly to machine precision.

The application problem involves image deconvolution and denoising with 
statistical multiscale estimation as presented in \cite{Aspelmeier2015, FrickMarnitzMunk12, FrickMarnitzMunk13}.  
We are well aware that there are many ways to model such problems that permit much less computationally 
intensive numerical solutions than the technique we present here.  Our interest in multiresolution 
deconvolution/denoising is two-fold:  first, it is one of the few techniques
available that has the potential to yield quantitative (i.e. statistical) guarantees for the recovered images, and secondly, 
it is an important instance of convex optimization problems where the number of constraints grows superlinearly 
as a function of the number of unknowns.  Our numerical demonstration addresses the first issue of quantitative 
image denoising:  if the numerics do not permit estimates for the distance to the model solution, then the quantitative 
assurances of the model are irrelevant.  Unlike the numerical approach proposed in 
\cite{FrickMarnitzMunk12, FrickMarnitzMunk13},
the numerical approach we present here permits error bounds to within
machine accuracy of our numerical solution to the true model solution.  

Following the approach proposed in \cite{FrickMarnitzMunk12} we quantify the difference between an estimate 
$v=Au$ and the data $y$
via the maximum absolute value of all  weighted inner products of the residual function $\triangle(\cdot; y):\Rn\to \Rn$:
\begin{equation}\label{e:residual j}
f_j(v)\equiv \left|\left\langle\omega_{j}, \triangle(v; y) \right\rangle\right|, \quad j\in \{1,2,\dots,M\}.
\end{equation}
The residual function used in \cite{FrickMarnitzMunk12} $\triangle$ is simply $v-y$.  
The weights $\omega_{j}$ are scaled window functions so that the set $\mathcal I\subset \{1,2,\dots,M\}$ is 
the index set corresponding to all collections of these subsets of the image.    
The statistical multiscale analysis requires that, on each window, 
\begin{equation}
\max_{j\in \mathcal I}\{f_j(v)\} \leq q.
\label{e:smre2}
\end{equation}
The same error $q$ is specified at all scales.  Hence $F_\epsilon$ in \eqref{e:Fepsilon}
specializes to 
\begin{equation}\label{e:Fq}
\map{F_q}{\Rn}{\Rbb^{M+1}}\equiv v\mapsto \left(f_1(v)-q, f_2(v)-q, \dots, f_M(v)-q,0\right)^T,
\end{equation}
for $\map{f_j}{\Rn}{\Rbb}$ defined by \eqref{e:residual j} ($j=1,\dots,M$) and 
\begin{equation}\label{e:theta}
 \map{\theta}{\Rbb^{M+1}}{\Rbb}:\theta(w)\equiv \max\{w_1,w_2,\dots,w_{M+1}\}.
\end{equation}  
(Here we are expanding the original $F_\epsilon$ by the constant function $f_{M+1}(v)\equiv 0$.) 
The max function is a standard tool in exact penalization methods \cite{Burke91, Conn2000}
and falls naturally into the context of piecewise linear-quadratic functions.  

Algorithm \ref{a:DADMM} does not specify how 
the iterates $u^{(k,j)}$ and $v^{(k,j)}$ are calculated.  The linear convergence of the 
inner iterations predicted in Theorem \ref{t:convergence-polyhedrality}, from which error bounds can be 
determined,  as well as the numerical convergence of the outer iterates to problem \eqref{e:exact}
is discussed next.

\subsection{Prox-evalutation}
Computation of $u^{(k, i+1)}$ and $v^{(k, i+1)}$ in Algorithm  \ref{a:DADMM} 
involves minimizing the sum of a convex quadratic function and (in general) a convex, nonsmooth, 
piecewise linear-quadratic function. This can be solved via any number of techniques ranging from 
first order methods like FISTA \cite{BeckTeboulle09} to higher-order nonlinear optimization methods like 
quasi-Newton methods studied in 
\cite{LewisOverton13}.  In order to take advantage of the relative sparsity of the active constraints, 
we propose the following (exact) algorithm.
\begin{center}
\fbox{%
        \addtolength{\linewidth}{-2\fboxsep}%
        \addtolength{\linewidth}{-2\fboxrule}%
    \begin{minipage}{\linewidth}%
	\begin{algorithm}[{\bf Steepest Subdifferential Descent}] \label{a:SP}$~$\\
% \dontprintsemicolon
{\bf Initialization}. Given $b$, $u$, the constant $\eta>0$ and an initial point $v^0$,  compute the residual $r^0\equiv b + \eta Au-\eta v^0$
and the projected residual $z^0\equiv P_{\sd(\theta(F_q(v^0)))}(r^0)$ for $\sd(\theta(F_q(v^0)))$ given by \eqref{e:sd theta2}. 
\\

{\bf For $l=0,1,2,\dots$}
\begin{itemize}
\item {\bf If $z^l = r^l$}
\begin{itemize}
\item set $\vbar=v^l$ and STOP;
\end{itemize}
\item {\bf else}
\begin{itemize}
\item set $v^{l+1}=v^l+\lambda_l \paren{z^l-r^l}$ where $\lambda_l>0$ is the largest constant $\lambda$ such that 
$\theta\paren{F_q\paren{v^l+\lambda \paren{z^l-r^l}}}=f_i\paren{v^l+\lambda \paren{z^l-r^l}}-q$ for $i\in I(v^l)$
with 
\begin{equation}\label{e:active index}
  I(v)\equiv\set{j}{f_j(v)-q=\theta(F_q(v))}; 
\end{equation}
\item compute $r^{l+1}\equiv b + \eta Au-\eta v^{l+1}$ and 
the projected residual 
\begin{equation}\label{e:proj resid}
z^{l+1}\equiv P_{\sd(\theta(F_q(v^{l+1})))}(r^{l+1});   
\end{equation}
\item increment $l=l+1$.
\end{itemize}
\end{itemize}
\end{algorithm}
\end{minipage}
}
\end{center}
Algorithm \ref{a:SP} is an {\em active set method} and the set $I(v)$ defined by 
\eqref{e:active index} is the set of {\em active indexes at $v$}.  Another helpful 
interpretation is as a steepest subgradient descent method for solving  
\begin{equation}\label{e:active set eq}
  \argmin_{v}\klam{G(v)\equiv \theta\paren{F_q(v)}-\langle b, v\rangle+\tfrac{\eta}{2}\|Au-v\|^2}.
\end{equation}
The steepest descent step is 
\[
v^{l+1} = v^l+\lambda_ld^l, 
\]
for $d^l\equiv P_{\sd G(v)}(0) = -r^l + z^l$ with 
$z^l\equiv P_{\sd\theta\paren{F_q\paren{v^l}}}\paren{r^l}$
and $r^l = b+\eta\paren{Au-v^l}$. 
The choice of the step length $\lambda_l$ ensures that, at each step $l$, the active set is 
growing;  specifically,  
\[
I\paren{v^l}\subset I\paren{v^l+\lambda_l d^l}.
\]
At termination, the subdifferential $\sd\theta\paren{F_q\paren{v^l}}$ is large enough that 
it contains the residual $r^l$.
The terminal point of Algorithm \ref{a:SP}, $\vbar$, is a point in \eqref{e:active set eq} 
since it satisfies the first-order optimality conditions:
\begin{equation}\label{e:foc ADMM2}
   0=\zbar - b - \eta\paren{Au-\vbar}\in \sd\theta\paren{F_q(\vbar)}-b-\eta\paren{Au-\vbar}=\sd G(\vbar),
\end{equation}
where $\zbar= P_{\paren{\sd\theta\paren{F_q(\vbar)}}}\paren{b+\eta\paren{A u-\vbar}}$.  
Replacing $u$ and $b$ with $u^{(k, i+1)}$ and $b^{(k, i)}$ respectively 
yields the update for $v^{(k,i)}$ in Algorithm \ref{a:DADMM}.

 The expression for the subdiffferential $\sd\theta\paren{F_q}$ is particularly simple in this case. 
Note that $I(v)\neq \emptyset$ for all $v$.  
Applying the (convex) calculus of subdifferentials to the objective 
$\theta(F_\epsilon(v))$, as permitted by the regularity of $\theta$ and $F$
(see, for instance \cite[Section 2.3]{Clarke83}), yields 
\begin{equation}\label{e:sd theta}
 \sd\theta\paren{F_q(v)} = 
	\hull\set{\nabla f_j(v)}{ j\in I(v)},
\end{equation}
where $\hull$ denotes the convex hull of a set of points.  This, of course, assumes that $f_j$ is 
differentiable at $v$ for those $j\in I(v)$.  Inspection of \eqref{e:residual j} shows that this is not the 
case in general, in particular at points $v^*$ where $f_j(v^*)=0$.  However, such points will never be in the 
active set $I(v^*)$ since $f(v^*)-q<0\leq \theta\paren{F_q\paren{v^*}}$ for all $q>0$, so we can safely 
apply formula \eqref{e:sd theta} without further ado.  This yields the following specialization for 
$f_j(v)=\left|\ip{w_j}{v-y}\right|$ given by \eqref{e:residual j}:
  \begin{eqnarray}\label{e:sd theta2}
\sd\theta\paren{F_q\paren{v}} &=& \hull\set{\nabla f_j(v)}{ j\in I(v)} \\
&=&	\begin{cases}
	  \hull\left\{\set{\sign\left(\ip{w_j}{v-y}\right)w_j}{j\in I(v)\setminus \{M+1\}},~0\right\}&\theta(F_q(v))\leq 0\\
	\hull\set{\sign\left(\ip{w_j}{v-y}\right)w_j}{j\in I(v)}&\theta(F_q(v))>0.	  
	\end{cases}\nonumber
\end{eqnarray}

\subsection{Synthetic data}
Fig.~\ref{Fig:s-data} shows a set of synthetic
exact data $u^*\in \mathbb R^n$ (shown in blue) and corresponding noisy data $y\in
\mathbb R^n$ (shown in green) with $n=512$ data points, as well as the 
reconstructed/denoised signal $\ubar\in\mathbb R^n$ (shown in red).  
In this example we consider only denoising, that is, the imaging operator $A$ is the identity so $v=u$.
The noisy data $y$ was generated by adding i.i.d.\ Gaussian random noise with standard deviation $\sigma=0.05$ to each
original data point of $u^*$.
\begin{figure}
(a) \includegraphics[height=.4\textheight]{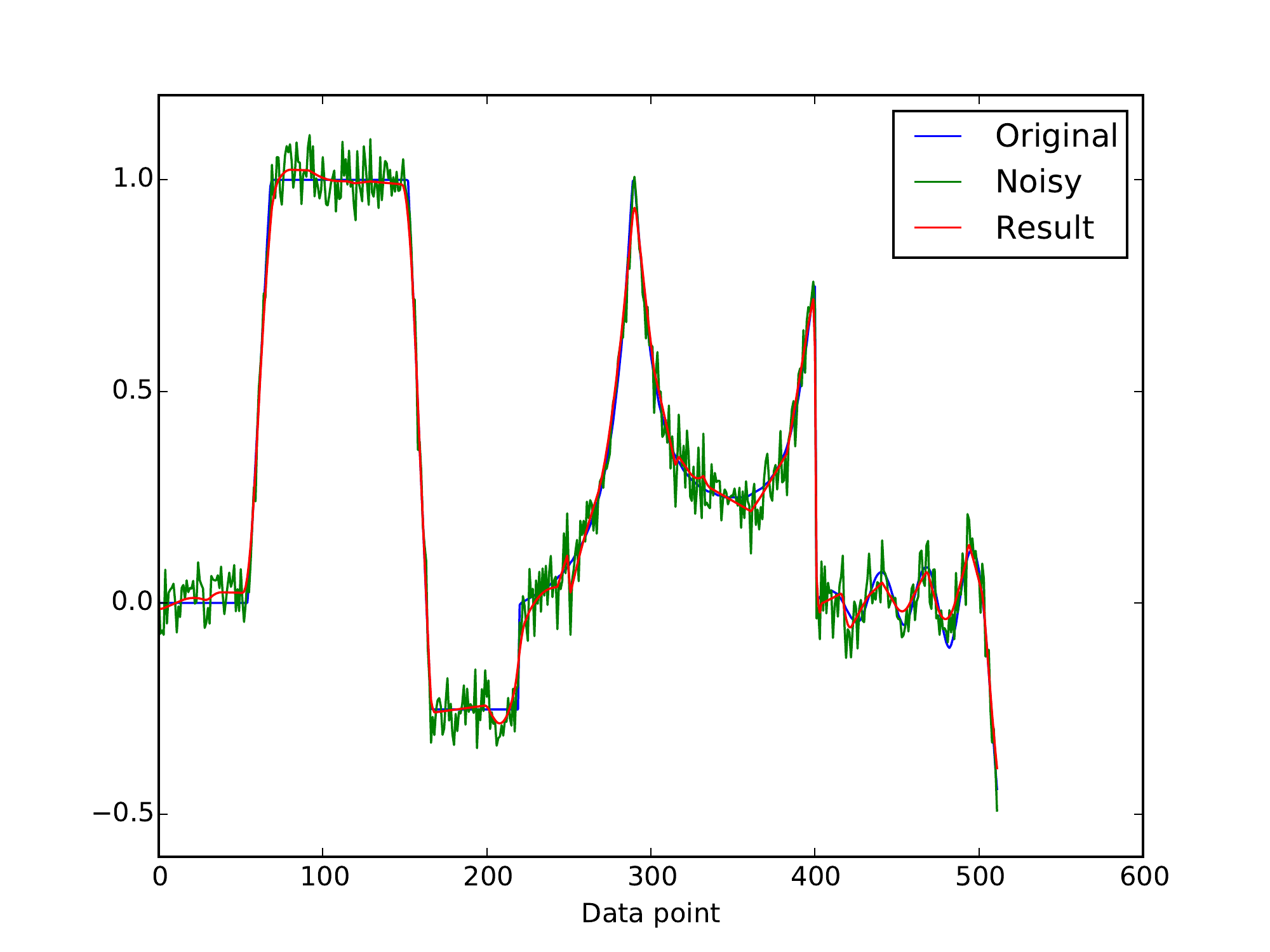}\\
(b) \includegraphics[height=.2\textheight]{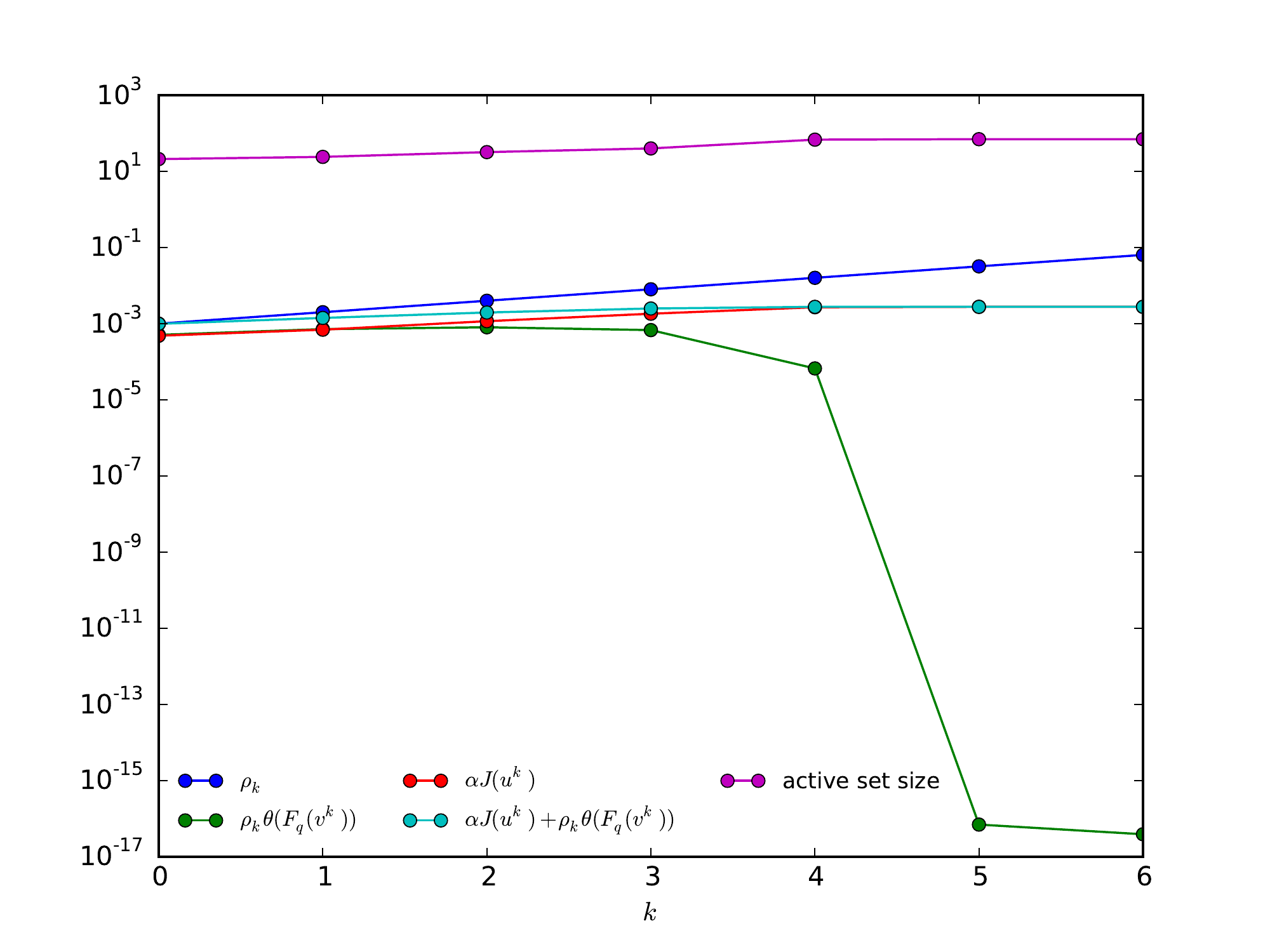}
(c) \includegraphics[height=.2\textheight]{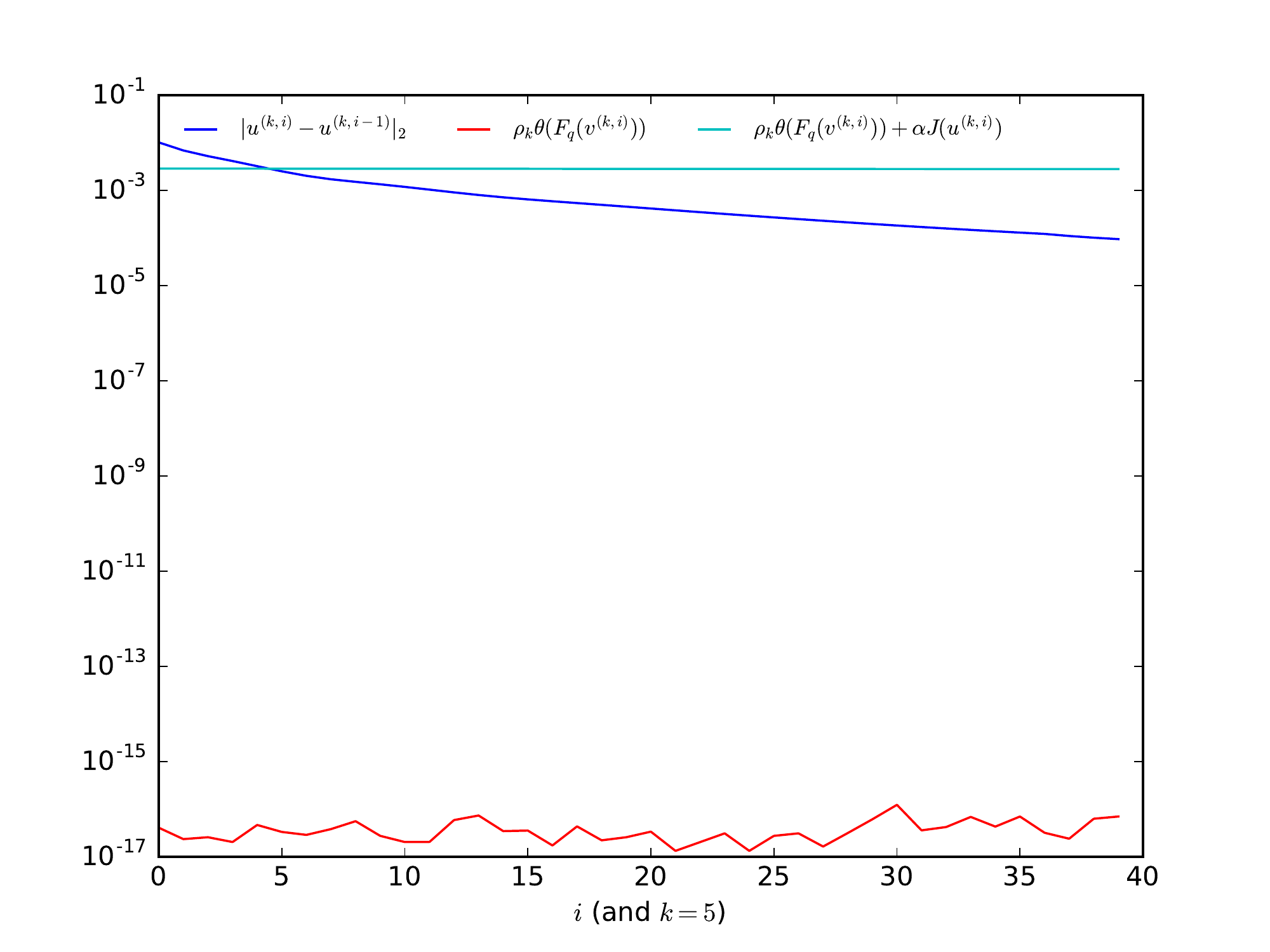}
\caption{(a) Original, noisy and reconstructed data for a one-dimensional
  denoising problem. (b) Outer iterates $k$ of Algorithm \ref{a:DADMM} showing solutions, 
  the constraint violation, the active set size and the objective value for 
  the penalized problem \eqref{e:implicit} for successively larger values 
  of the penalty parameter $\rho$.
(c) Inner iterates of Algorithm \ref{a:DADMM} with $\rho_{5}=.032$:  step sizes, 
constraint violation, objective value and gap between the primary, domain-space variables $u^{(k, i)}$.  }
\label{Fig:s-data}
\end{figure}
In our specialization of 
problem \eqref{e:exact} we use the total variation penalty
\begin{equation}
J(u) \equiv a ||\nabla u||_2^2,
\label{e:smre1-s}
\end{equation}
where $\nabla$ is the (discrete) gradient operator.  The structured constraints are given by 
\eqref{e:smre2}.
The weights $w_{j}\in\mathbb R^n$ are scaled window functions of all
intervals of lengths between $1$ and $20$ pixels, and $\mathcal I\subset \{1,2,\dots,M\}$ is 
the index set corresponding to all collections of successive pixels in $\{1,2,\dots,n\}$ of cardinality -- or length -- 
from $1$ to $20$.   The same error $q$ is specified at all scales.  

For a signal length $n=512$ with 
interval lengths from $1$ to $20$ the number of windows is $M=10050$.  The constant $\alpha$ is, strictly speaking,
redundant but was introduced as an additional means to balance the
contributions of the individual terms to make the most of limited numerical accuracy (double precision). 
We chose $\alpha=0.01$.  The constant $q$ was taken to be $2\sigma$.

Figure \ref{Fig:s-data}(a) shows very good correspondence of the reconstructed signal to the 
original.  The multi-resolution constraint prevents the usual ``blocky'' artifacts common to 
image denoising with TV-regularization.  The eventual (starting from around iteration $15$) 
linear convergence of the algorithm can be seen in Figure \ref{Fig:s-data}(c).  
Under the assumption that the latter iterates are indeed in the region of local linear 
convergence, the observed convergence rate is $c=0.9245$, which yields an 
a posteriori upper bound on the distance of the $39$th iterate to the true solution: 
$\|u^{39}-u^*\|\leq \frac{c}{1-c}\|u^{38}-u^{39}\|=0.001244$.  Since the signal length is $512$, 
this amounts to $5$ digits of accuracy in the pointwise value of the signal.  

\subsection{Laboratory data}
For our main demonstration, we are presented with an image $y\in \Rn$ (Figure \ref{Fig:edata}(a)) generated 
from a Stimulated Emission Depletion (STED) microscopy experiment 
\cite{Hell1994, Hell2000}
conducted at the 
Laser-Laboratorium G\"ottingen examining tubulin, represented as
the ``object'' $u\in\Rm$.  The imaging model is simple linear convolution, $Au\approx y$ where $A$ is 
a convolution matrix with a nonsymmetric experimentally measured point-spread function ($290\mathrm{nm}^2$).  
The measurement $y$ is {\em noisy} or otherwise inexact, and 
thus an exact solution $Au=y$ is not desirable.   Although the noise in such images is usually  
modeled by Poisson noise, a Gaussian noise model with constant variance suffices as the photon counts 
are of the order of $100$ per pixel and do not vary significantly across the image.
Figure~\ref{Fig:edata}(b) shows a close-up which we used as the noisy data $y\in
\mathbb R^2$ with $n=64\times 64$ data points.  We calculate the numerically reconstructed 
tubulin density $\ubar$ shown in Figure \ref{Fig:soln}(a) via Algorithm \ref{a:DADMM} 
for the problem \eqref{e:implicit} with the qualitative objective
\begin{equation}
J(u) \equiv \alpha || u||^2.
\label{e:smre1}
\end{equation}
   
\begin{figure}
\begin{tabular}{llll}
(a)&\includegraphics[height=0.4\textwidth]{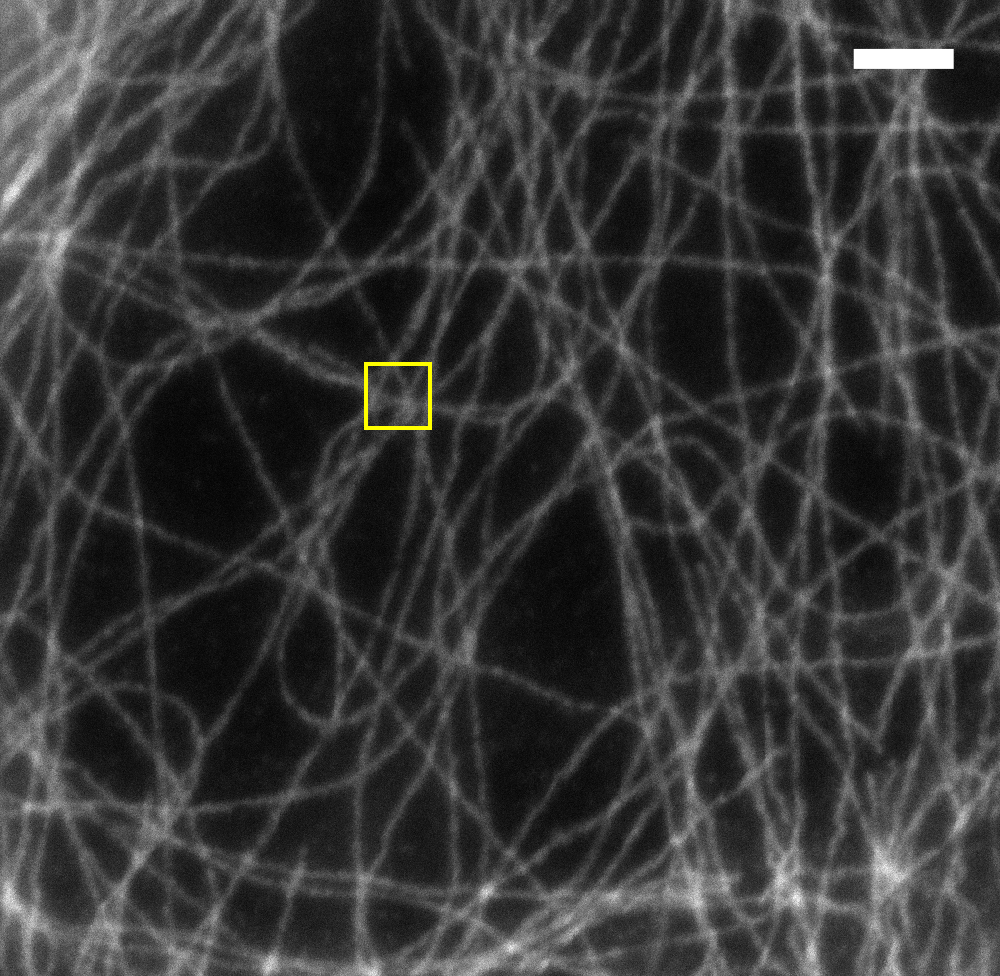}&
(b)&\includegraphics[height=0.4\textwidth]{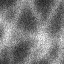}\\
\end{tabular}
\caption{(a) Original data (STED image of Tubulin), (b) an enlargement of the indicated box to be processed.
The length of scale bar in (a) is $1\mu$m, the size of the reconstruction window (b) is $640\times 640\,\mathrm{nm}^2$.}
\label{Fig:edata}
\end{figure}

\begin{figure}
\begin{tabular}{llll}
(a)&\includegraphics[height=0.4\textwidth]{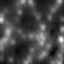}&
(b)&\includegraphics[height=0.4\textwidth]{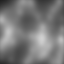}
\end{tabular}
\caption{(a) Numerical reconstruction via Algorithm \ref{a:DADMM} from the imaging data shown 
in Figure \ref{Fig:edata} for $\rho=4096$.
(b) The reconstruction convolved with the measured PSF.    
At each resolution used for the reconstruction, the sum of the pixel values in 
(b) lie within a confidence interval of $3\sigma$ of those in Figure \ref{Fig:edata}(b).}
\label{Fig:soln}
\end{figure}
\begin{figure}
\begin{tabular}{ll}
(a)&\includegraphics[height=0.4\textheight]{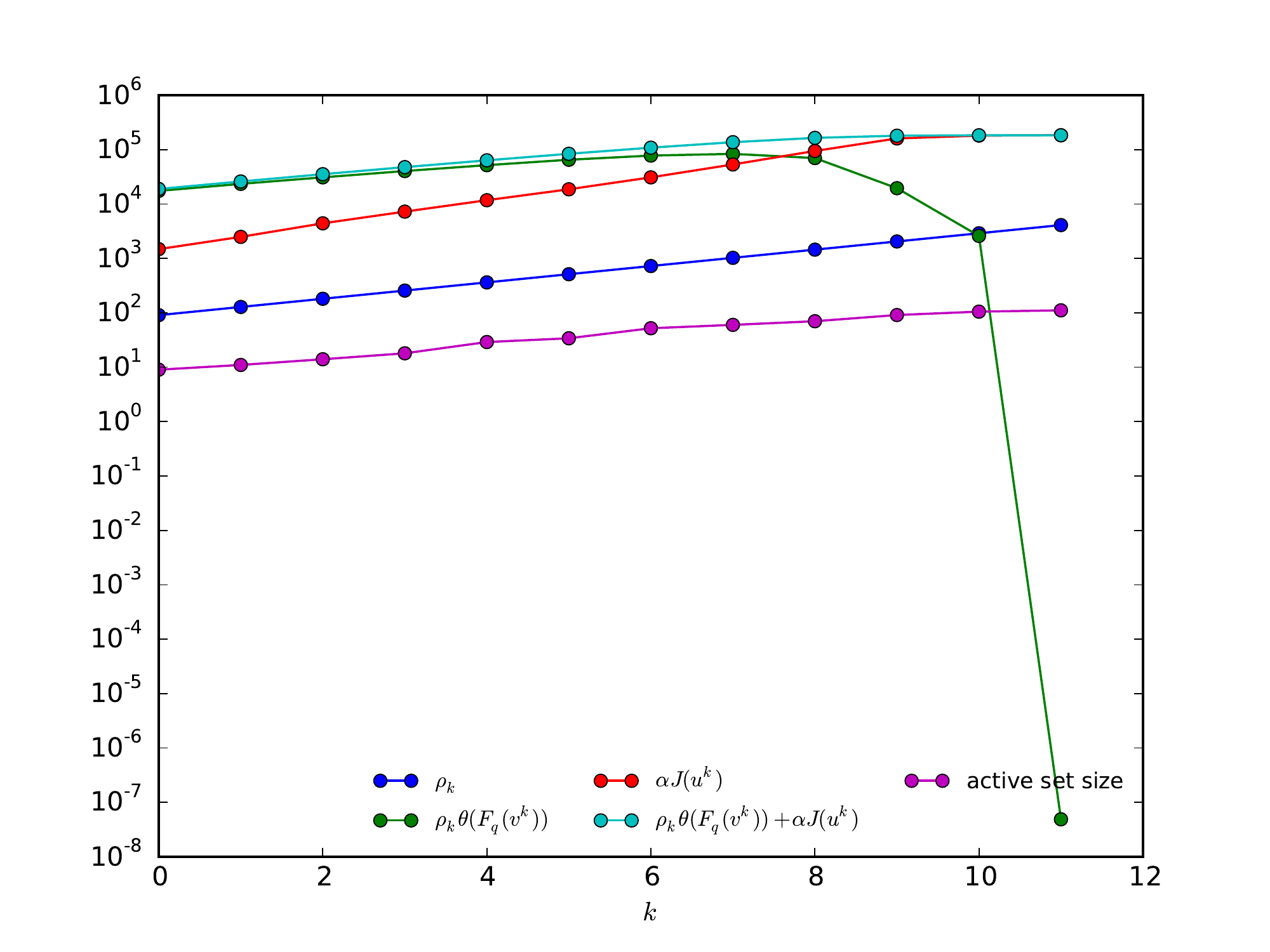}\\
(b)&\includegraphics[height=0.4\textheight]{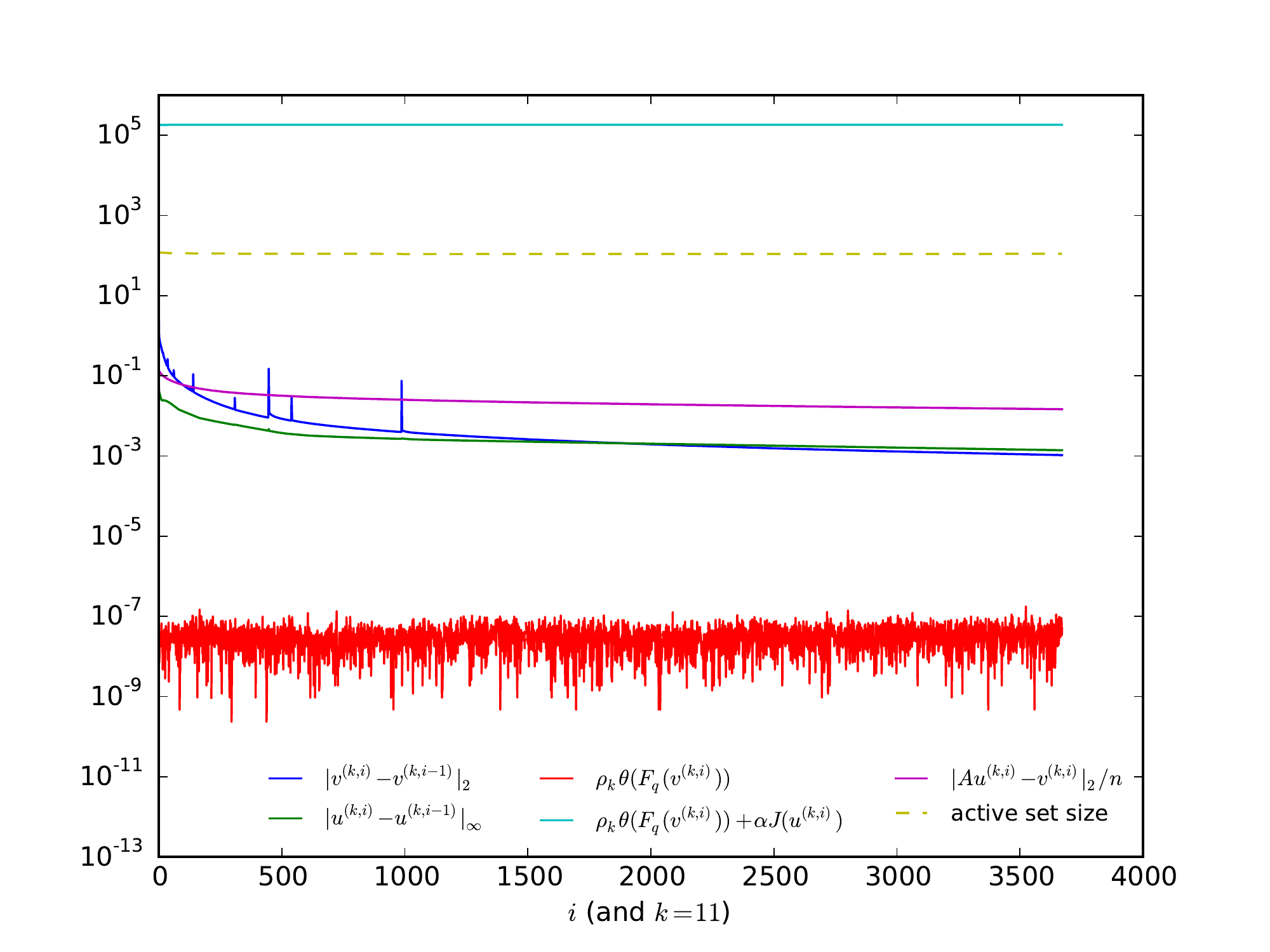}
\end{tabular}
\caption{(a) Outer iterates $k$ of Algorithm \ref{a:DADMM} showing solutions, 
  constraint violation, the value of the regularizer, the objective value for 
  the penalized problem \eqref{e:implicit} and the active set size for successively larger values 
  of the penalty parameter $\rho$.
(b)  Inner iterates of Algorithm \ref{a:DADMM} with $\rho_{11}=4096$:  step sizes, 
constraint violation, objective value and gap between the auxiliary, image-space  
variables $v^{(k, i)}$ and the primary, domain-space variables $u^{(k, i)}$.  }
\label{Fig:performance}
\end{figure}

For the image size $n=64\times 64$ with the window system of
squares of lengths $1$ and $2$, the number of windows is $M=8065$.  
The constant $\alpha$ in \eqref{e:smre1} is, strictly speaking,
redundant but was introduced as an additional means to balance the
contributions of the individual terms to make the most of limited numerical accuracy (double precision). 
We chose $\alpha=0.01$.  The constant $q$ was chosen so that the model solution would be 
no more than $3$ standard deviations from the noisy data on each interval of each scale.

We emphasize that, since this is experimental data, there is no ``truth'' for comparison - the constraint, 
together with the error bounds on the numerical solution to the model solution provide statistical 
guarantees on the numerical reconstruction \cite{FrickMarnitzMunk12}.   The numerical ``image'' generated from 
the reconstructed tubulin density, $\ubar$, is given by $\vbar=A\ubar$ and is shown in Figure \ref{Fig:soln}(b);  this 
figure is a denoised version of the measured data shown in Figure \ref{Fig:edata}(b). 
% In the ideal setting of exact arithmetic, if 
% Figure \ref{Fig:soln}(b) were the exact image of 
% Figure \ref{Fig:soln}(a) under $A$, and if Figure \ref{Fig:soln}(a) were the exact solution to problem \eqref{e:exact}, 
% then the statistical model behind problem \eqref{e:exact} allows one to state that, for the parameter values
% used in this demonstration,  at all resolutions used in the optimization model, the sum of the pixel values in 
% Figure \ref{Fig:soln}(b) lies within a confidence interval of $3$ standard deviations of perfect, noiseless
% data.  

In Figure~\ref{Fig:performance}(a) a sample run of the algorithm shows a succession of 
outer iterations. The inner iteration is shown in Figure \ref{Fig:performance}(b) with 
the value of $\rho_{11}=4096$ for which the 
constraints are exactly satisfied (to within machine precision), indicating the correspondence
of the computed solution of problem \eqref{e:implicit} to a solution to the exact model problem \eqref{e:exact}.
The eventual (starting from around iteration $1500$) 
linear convergence of the algorithm can be seen in Figure \ref{Fig:performance}(c).  
Under the assumption that the latter iterates are indeed in the region of local linear convergence, the observed 
convergence rate is $c=0.9997$, which yields an 
a posteriori upper estimate of the pixelwise error of about $8.9062e^{-4}$, or $3$ digits of accuracy at each pixel.  
 
\section{Concluding remarks}
We have focused our attention on the ADMM algorithm due partly to its prevalence in practice, and 
partly its amenability to our theoretical techniques.  The parameter $\eta$ in Algorithm \ref{a:ADMM}
was left constant.  How to choose this parameter in the context of minimization is a perplexing question 
and worthy of further study.  Our theoretical framework can also be adapted to Krasnoselski-Mann relaxations of 
the Douglas--Rachford algorithm.  Statements about this will appear in work underway 
studying more generally {\em averaged mappings}.

The statistical interpretation of the reconstruction in Figure \ref{Fig:soln}(b) 
as described in \cite{FrickMarnitzMunk12, FrickMarnitzMunk13} opens the door to a quantitative 
approach to image processing, but this is only valid when 
one can estimate the distance of the numerical approximation to the exact solution to the 
underlying model optimization problem \eqref{e:exact}.  
Determining quantitative estimates for how 
close the numerical solution shown in Figure \ref{Fig:soln}(a) is to an exact solution to 
problem \eqref{e:exact} under the assumption of exact evaluation of the associated prox operators 
of has been the topic of our study.  

What is needed and largely missing in the current treatment of 
algorithms in the literature 
is a complete error analysis accounting for accumulated errors at each stage of algorithms -- due to finite precision 
or finite termination of iterative procedures -- together with statements about how close one can get to the 
{\em solution} to a given optimization problem,  as opposed to its {\em optimal value}, the latter having 
in general no necessary connection to the former. 
This is a monumental project that has not received as much attention in the literature as studies of 
complexity based upon function values.  
As we argued, the standard approach for handling inexact computation by assuming summable errors 
does not solve the problem, it just 
distributes it over infinitely many iterates.   An alternative to this was suggested in 
\cite[Section 6]{LLM} and applied in \cite{Luke12} which allows a fixed error over all iterations without 
compromising local linear convergence.   More work in this direction would narrow the gap 
between theory and practice. 

\section*{Appendix}
\label{app:DR-ADMM}
{\em Duality of ADMM and the Douglas--Rachford Algorithm.}
Consider the sequence $\paren{b^k,v^k}_{k\in\Nbb}$ of the Douglas--Rachford iteration \ref{e:DRSalg1}, for the case 
$B:=\partial( J^*\circ(-A^{T}))$; $D:=\partial  H^*$.  Recalling the two-step implementation \eqref{a:DRSalg1'}, 
denote $\bar{p}:=b^k-\eta v^k$ and $p':=q^{k+1}$.  Then \eqref{a:DRSalg1'a} is the proximal step $p'=(I+\eta \partial( J^*\circ(-A^{T})) )^{-1}\bar{p}$ on the 
operator $B=\partial( J^*\circ(-A^{T}))$. If $A$ has full column rank, by \cite[Proposition  3.32(iv)]{Eckstein}, this step can be performed by
\begin{eqnarray}
 \label{e:step1}u^{k+1}&=&\mbox{arg}\min_u\{ J(u)+\langle \bar{p}+\eta v^k, Au \rangle+\tfrac{\eta}{2}\|Au-v^k\|^2\};\\
 \label{e:step11}p'&=&\bar{p}+\eta A u^{k+1}.
\end{eqnarray}
Indeed, since $A$ has full rank, $ J(u)+\langle \bar{p}+\eta v^k, Au \rangle+\tfrac{\eta}{2}\|Au-v^k\|^2$ is a proper
strongly convex function of $u$ and has a unique minimizer $u^{k+1}$. From the optimality 
condition for \eqref{e:step1},
  \[0 \in \partial  J(u^{k+1})+A^{T}(\bar{p}+\eta Au^{k+1})=\partial  J(u^{k+1})+A^{T} p'.\]
\ni Hence, $(u^{k+1},-A^{T} p' ) \in \gph \partial  J$ which implies $(-A^{T} p', u^{k+1}) \in \gph \partial  J^*$. This gives
 \begin{eqnarray*}
  \Leftrightarrow  (p', u^{k+1}) &\in& \gph\paren{\partial  J^*\circ (-A^{T})}\\
    \Leftrightarrow  (p', -A u^{k+1}) &\in & \gph \paren{-A\circ \partial  J^*\circ (-A^{T}) }\subseteq \gph \partial \paren{ J^*\circ (-A^{T})}.
 \end{eqnarray*}
 Using \eqref{e:step11},
\begin{eqnarray*} (p', \tfrac{1}{\eta}(\bar{p}-p') &\in & \gph \partial \paren{ J^*\circ (A^{T})}\\
    \Leftrightarrow p'&=&(I+\eta\partial( J^*\circ(A^{T})) )^{-1}\bar{p}.
 \end{eqnarray*}
Substituting $\bar{p}=b^k-\eta v^k$ in \eqref{e:step1}-\eqref{e:step11} yields
 \begin{eqnarray}
 \label{e:step1a}u^{k+1}&=&\mbox{arg}\min_u\{ J(u)+\langle b^k-\eta v^k+\eta v^k, Au \rangle+\tfrac{\eta}{2}\|Au-v^k\|^2\};\\
 \label{e:step1b}q^{k+1}&=&  b^k-\eta v^k+\eta A u^{k+1}.
\end{eqnarray}

Similarly, if we denote  $\bar{p}:=q^{k+1}+\eta v^k(= b^k+\eta A u^{k+1})$ and $p':=b^{k+1}$, \eqref{a:DRSalg1'b} is the proximal step 
$p'=(I+\eta \partial  H^*)^{-1}\bar{p}$ on the 
operator $D=\partial  H^*$ which can be performed via
\begin{eqnarray*}
 v^{k+1}&=&\mbox{arg}\min_v\{ H (v)-\langle \bar{p}-\eta A u^{k+1}, v \rangle+\tfrac{\eta}{2}\|Au^{k+1}-v\|^2\};
\\p'&=&\bar{p}-\eta v^{k+1}.
\end{eqnarray*}
Substituting $\bar{p}=b^k+\eta A u^{k+1}$,
\begin{eqnarray}\label{e:step2}
\label{e:step2a} v^{k+1}&=&\mbox{arg}\min_v\{ H (v)-\langle b^k+\eta A u^{k+1}-\eta A u^{k+1}, v \rangle+\tfrac{\eta}{2}\|Au^{k+1}-v\|^2\};\\
\label{e:step2b} b^{k+1}&=&b^{k}+\eta Au^{k+1}-\eta v^{k+1}.
\end{eqnarray}
 Now, \eqref{e:step1a}-\eqref{e:step1b} and \eqref{e:step2a}-\eqref{e:step2b} together yield
 \begin{eqnarray*}
u^{k+1}&=&\mbox{arg}\min_u\{ J(u)+\langle b^k, Au \rangle+\tfrac{\eta}{2}\|Au-v^k\|^2\};\\
 v^{k+1}&=&\mbox{arg}\min_v\{ H (v)-\langle b^k, v \rangle+\tfrac{\eta}{2}\|Au^{k+1}-v\|^2\};\\
 b^{k+1}&=&b^{k}+\eta( Au^{k+1}- v^{k+1}).
\end{eqnarray*}
This is the ADMM algorithm \eqref{ADMM1}-\eqref{ADMM3} for the primal problem ($\mathcal P_\lambda$). 
\hfill$\Box$
\bigskip

\medskip

% \bibliographystyle{plain}
% \bibliography{master_citations}

\begin{thebibliography}{10}

\bibitem{AragonBorweinTam13}
F.~J.~ARAG{\'O}N ARTACHO, J.~M. BORWEIN, AND M.~TAM,
\newblock{\em  Recent results on {D}ouglas--{R}achford methods},
\newblock {Serdica Math. J.}, 39 (2013), pp. 313--330.

\bibitem{Aspelmeier2015}
T. ~ASPELMEIER, A.~ EGNER and A.~ MUNK,
\newblock{\em Modern {{statistical challenges}} in {{high-resolution fluorescence microscopy}}},
\newblock  {Annu. Rev. Stat. Appl.},  2 (2015), pp. 163--202.

\bibitem{AttouchePeypouquet15}
H.~ATTOUCH AND J.~PEYPOUQUET,
\newblock{\em The rate of convergence of Nesterov's accelerated forward-backward method is actually} $o\left(k^{-2}\right)$,
\newblock{arXiv:1510.08740} (Nov. 2015).


\bibitem{AubinFrankowska90}
J.-P. AUBIN AND H.~FRANKOWSKA, 
\newblock {\em Set-valued analysis}, Birkh{\"{a}}user, Boston, 1990.

\bibitem{BauschkeCombettes11}
H.~H. BAUSCHKE AND P.~L. COMBETTES,
\newblock {\em Convex Analysis and Monotone Operator Theory in Hilbert Spaces},
\newblock {CMS Books in Mathematics, Springer-Verlag, New York, 2011}.

\bibitem{BauschkeCombettesLuke04}
H.~H. BAUSCHKE, P.~L. COMBETTES, AND D.~R. LUKE,
\newblock  {\em Finding best approximation pairs relative to two closed convex sets in Hilbert spaces},
\newblock {J. Approx. Theory}, 127 (2004), pp. 178--192.

\bibitem{BauschkeNollPhan15}
H.~H. BAUSCHKE, D.~NOLL, AND H.~M. PHAN,
\newblock {\em Linear and strong convergence of algorithms involving averaged
  nonexpansive operators},
\newblock {J. Math. Anal. Appl.}, 421 (2015), pp. 1--20.

\bibitem{BauschkeBelloCruzNghiaPhanWang14}
H.~H. BAUSCHKE, J.~Y. BELLO CRUZ, T.~T.~A. NGHIA, H.~M. PHAN, AND X.~WANG,
\newblock {\em The rate of linear convergence of the {D}ouglas--{R}achford algorithm
  for subspaces is the cosine of the {F}riedrichs angle},
\newblock {J. Approx. Theory}, 185 (2014), pp. 63--79.

\bibitem{BauschkeNoll14}
H.~H. BAUSCHKE AND D.~NOLL,
\newblock {\em On the local convergence of the {D}ouglas--{R}achford algorithm}, 
\newblock {Arch. Math.(Basel)}, 102 (2014), pp. 589--600.

\bibitem{BeckTeboulle09}
A.~BECK AND M.~TEBOULLE,
\newblock {\em A fast iterative shrinkage-thresholding algorithm for linear inverse
  problems},
\newblock {SIAM J. Imaging Sci.}, 2 (2009), pp. 183--202.

\bibitem{Bert82}
D.~P. BERTSEKAS,
\newblock {\em Constrained Optimization and Lagrange Multiplier Methods},
\newblock Academic Press, New York, 1982.

% \bibitem{Bertsekas99}
% D.~P. BERTSEKAS,
% \newblock {\em Nonlinear Programming},
% \newblock Athena Scientific, Belmont, Massachusettes, 2nd edition, 1999.
% 
\bibitem{CANO2}
J.~M. BORWEIN AND A.~S. LEWIS,
\newblock {\em Convex Analysis and Nonlinear Optimization: Theory and
  Examples},
\newblock Springer Verlag, New York, 2nd edition, 2006.

\bibitem{BorweinTam15}
J.~M. BORWEIN AND M.~TAM,
\newblock {\em The cyclic {D}ouglas--{R}achford method for inconsistent feasibility
  problems},
\newblock {J. Nonlinear Convex Anal.}, 16 (2015), pp. 537--584.


\bibitem{Boley13}
D. BOLEY, 
\newblock {\em Local linear convergence of the alternating direction method of multipliers on quadratic or linear programs. }
\newblock{SIAM J. Optim.}, 23 (2013), pp. 2183--2207. 

\bibitem{BoydParikhChuPeleatoEckstein11}
S.~BOYD, N.~PARIKH, E.~CHU, B.~PELEATO, AND J.~ECKSTEIN,
\newblock{\em  Distributed optimization and statistical learning via the alternating
  direction method of multipliers},
\newblock {Found. Trend Machine Learning}, 3 (2011), pp. 1--122.

\bibitem{BurkeFerris93}
J.~V. BURKE AND M.~C. FERRIS,
\newblock {\em Weak sharp minima in mathematical programming},
\newblock {SIAM J. Control. Optim.}, 31 (1993), pp. 1340--1359.

\bibitem{Burke91}
J.~V. BURKE,
\newblock {\em An exact penalization viewpoint of constrained optimization},
\newblock {SIAM J. Control Optim.}, 29 (1991), pp. 968--998.

\bibitem{Clarke83}
F.~H. CLARKE,
\newblock {\em Optimization and Nonsmooth Analysis},
\newblock Wiley, New York, 1983.
\newblock Republished as Vol.~5, Classics in Applied Mathematics, SIAM, 1990.

\bibitem{Conn2000}
A.~R. CONN, N.~I.~M. GOULD, AND PH.~L. TOINT,
\newblock {\em Trust Region Methods},
\newblock {SIAM Publications, Philadelphia, 2000.}

\bibitem{DontchevRockafellar09}
A.~L. DONTCHEV AND R.~T. ROCKAFELLAR,
\newblock {\em Implicit Functions and Solution Mapppings},
\newblock {Srpinger-Verlag}, New York, 2009.

% \bibitem{DontchevZolezzi93}
% A.~L. DONTCHEV AND T.~ZOLEZZI,\newblock {\em Well-posed Optimization Problems},
% \newblock {Srpinger-Verlag}, New York, 1993.
% 
\bibitem{DougRach56}
J.~DOUGLAS AND H.~H. RACHFORD,
\newblock {\em On the numerical solution of heat conduction problems in two or three
  space variables},
\newblock {Trans. Amer. Math. Soc.}, 82 (1956), pp. 421--439.

\bibitem{Eckstein}
J.~ECKSTEIN,
\newblock {\em Splitting Methods for Monotone Operators with Applications to
  Parallel Optimization},
\newblock PhD thesis, MIT, 1989.

\bibitem{EckBert92}
J.~ECKSTEIN AND D.~P. BERTSEKAS,
\newblock {\em On the {D}ouglas--{R}achford splitting method and the proximal point
  algorithm for maximal monotone operators},
\newblock {Math. Program.}, 55 (1992), pp. 293--318.

\bibitem{EcksteinYao15}
J.~ECKSTEIN AND W.~YAO,
\newblock {\em Understanding the convergence of the alternating direction method of
  multipliers: theoretical and computational perspectives},
\newblock {Pac. J. Optim.},  11( 2015), pp. 619--644.

\bibitem{Fletcher85}
R.~FLETCHER,
\newblock {\em An $\ell_1$ penalty method for nonlinear contraints}, {\em
  Numerical Optimization 1984}, \newblock P.~T. Boggs, R.~H. Byrd, and R.~B. Schnabel, eds., SIAM, Philadelphia, 1985, pp. 26--40.

\bibitem{FrickMarnitzMunk12}
K.~FRICK, P.~MARNITZ, AND A.~MUNK,
\newblock {\em Statistical multiresolution Dantzig estimation in imaging:
  Fundamental concepts and algorithmic framework},
\newblock {\em Electron. J. Stat.}, 6 (2012), pp. 231--268.

\bibitem{FrickMarnitzMunk13}
K.~FRICK, P.~MARNITZ, AND A.~MUNK,
\newblock {\em {Statistical {{multiresolution estimation}} for {{variational imaging}}: 
{{with}} an {{application}} in {{Poisson-biophotonics}}}},
\newblock {\em {J. Math. Imaging and Vision}}, 46 (2013), pp. 370--387.


\bibitem{FriedlanderTseng07}
M.~P. FRIEDLANDER AND P.~TSENG,
\newblock {\em Exact regularization of convex programs},
\newblock {SIAM J. Optim.}, 18 (2007), pp. 1326--1350.

\bibitem{Gabay83}
D.~GABAY,
\newblock {\em Augmented Lagrangian methods: Applications to the solution of
  boundary- value problems}, in \newblock {Applications of the Method of Multipliers
  to Variational Inequalities}, \newblock {North-Holland}, 1983, pp. 299--331.
  
  \bibitem{Giselsson15}
    P.~GISELSSON,\newblock {\em Tight global linear convergence rate bounds for Douglas-Rachford splitting},
\newblock  arXiv:1506.01556 (June, 2015).

\bibitem{Giselsson15b}
    P.~GISELSSON,
\newblock {\em Tight linear convergence rate bounds for Douglas-Rachford splitting and ADMM},
\newblock arXiv:1503.00887 (March, 2015).

\bibitem{Glowinski75}
R.~GLOWINSKI AND A.~MARROCO,
\newblock {\em Sur l'approximation, par elements finis d'ordre un, et las
  resolution, par penalisation-dualit\`e, d'une classe de problemes de
  dirichlet non lineares},
\newblock { Revue Francais d'Automatique, Informatique et Recherche
  Op\'erationelle}, 9(R-2) (1975), pp. 41--76.

\bibitem{Mangasarian79}
S.-P. HAN AND O.~L. MANGASARIAN,
\newblock {\em Exact penalty functions in nonlinear programming},
\newblock {Math. Program.}, 17 (1979), pp. 251--269.

\bibitem{HeYuan12}
B.~HE AND X.~YUAN,
\newblock {\em On the $O(1/n)$ convergence rate of the {D}ouglas--{R}achford
  alternating direction method},
\newblock {SIAM J. Numer. Anal.}, 50 (2012), pp. 700--709.

\bibitem{Hell1994}
S. W.~HELL and J. WICHMANN,
  \newblock{\em Breaking the diffraction resolution limit by stimulated emission: 
  stimulated-emission-depletion fluorescence microscopy},
  \newblock{Optics Letters}, 19 (1994), pp. 780--782.

\bibitem{HesseLuke13}
R.~HESSE AND D.~R. LUKE,
\newblock {\em Nonconvex notions of regularity and convergence of fundamental
  algorithms for feasibility problems},
\newblock {SIAM J. Optim.}, 23 (2013), pp. 2397--2419.

\bibitem{HesseLukeNeumann14}
R.~HESSE, D.~R. LUKE, AND P.~NEUMANN,\newblock {\em Alternating projections and {D}ouglas--{R}achford for sparse affine
  feasibility},
\newblock {IEEE Trans. Signal Process.}, 62 (2014), pp. 4868--4881.

\bibitem{Hell2000}
T. A.~KLAR, S.~ JAKOBS, M. ~DYBA, A.~ ENGER and S. W. ~HELL, 
\newblock{\em Fluorescence microscopy with diffraction resolution barrier broken by stimulated emission},
\newblock {Proc. Natl. Acad. Sci. USA}, 97 (2000), pp. 8206--8210.


\bibitem{KlatteKummer09}
D.~KLATTE AND B.~KUMMER,
\newblock {\em Optimization methods and stability of inclusions in Banach spaces},
\newblock {Math. Program.}, 117 (2009), pp. 305--330.

\bibitem{LLM}
A.~S. LEWIS, D.~R. LUKE, AND J.~MALICK,
\newblock {\em Local linear convergence for alternating and averaged nonconvex
  projections},
\newblock {Foundations of Computational Mathematics}, 9 (2009), pp. 485--513.

\bibitem{LewisOverton13}
A.~S. LEWIS AND M.~L. OVERTON,
\newblock {\em Nonsmooth optimization via quasi-{N}ewton methods},
\newblock {Math. Program.}, 141 (2013), pp. 135--163.

\bibitem{LiPong15}
G.~LI AND T.~K. PONG,
\newblock {\em {D}ouglas--{R}achford splitting for nonconvex feasibility problems},
\newblock { Math. Program.}, accepted.

\bibitem{LiangPeyreFadiliLuke}
J.~LIANG, G.~PEYR\'E, J.~FADILI, AND D.~R. LUKE,
\newblock{\em Activity identification and local linear convergence of
  {D}ouglas--{R}achford under partial smoothness},
\newblock {in Proceedings of the SSVM 2015}, 2015.

\bibitem{LionsMercier79}
P.~L. LIONS AND B.~MERCIER,
\newblock {\em Splitting algorithms for the sum of two nonlinear operators},
\newblock {SIAM J. Numer. Anal.}, 16 (1979), pp. 964--979.

\bibitem{Luke12}
D.~R. LUKE.
\newblock{\em Local linear convergence of approximate projections 
onto regularized sets},
\newblock {Nonlinear Anal.}, 75 (2012), pp. 1531--1546.

\bibitem{Mangasarian85}
O.~L. MANGASARIAN,
\newblock {\em Sufficiency of exact penalty minimization},
\newblock {SIAM J. Control Optim.}, 23 (1985), pp. 30--37.

\bibitem{Moreau65}
J.~J. MOREAU,
\newblock {\em Proximit\'e et dualit\'e dans un espace {H}ilbertian},
\newblock {Bull. Soc. math. Fr.}, 93 (1965), pp. 273--299.

\bibitem{NishiharaLessardRechtPackardJordan15}
R.~NISHIHARA, L.~LESSARD, B.~RECHT, A.~PACKARD, AND M.~JORDAN,
\newblock {\em A general analysis of the convergence of ADMM},
\newblock arXiv:1502.02009 (Feb. 2015).

\bibitem{NocedalWright}
J.~NOCEDAL AND S.~WRIGHT,
\newblock {\em Numerical Optimization}, Springer Verlag,
New York, 2000.

\bibitem{GoldsteinO'DonoghueSetzer12}
B.~O'DONOGHUE, T.~GOLDSTEIN, AND S.~SETZER,
\newblock{\em Fast alternating direction optimization methods},
\newblock {SIAM J. Imaging Sci.}, 7 (2014), pp. 1588--1623.

\bibitem{ParikhBoyd14}
N.~PARIKH AND S.~BOYD,
\newblock {\em Proximal Algorithms},
\newblock {Found. Trends Optim.}, 1 (2014), pp. 123--231.

\bibitem{PatrinosStellaBemporad14}
P.~PATRINOS, L.~STELLA, AND A.~BEMPORAD,
\newblock {\em {D}ouglas--{R}achford splitting: complexity estimates and accelerated
  variants}, in \newblock {53rd IEEE Conference on Decision and Control}, 2014, 4234--4239.

\bibitem{PenotCWD}
J.-P. PENOT,
\newblock {\em Calculus Without Derivatives},
\newblock Springer, New York, 2013.

\bibitem{Phan15}
H.~PHAN,
\newblock {\em Linear convergence of the {D}ouglas--{R}achford method for two closed
  sets},
\newblock {Optimization}, accepted.

\bibitem{Rockafellar76}
R.~T. ROCKAFELLAR,
\newblock {\em Monotone operators and the proximal point algorithm},
\newblock {SIAM J. Control Optim.}, 14 (1976), pp. 877--898.

\bibitem{VA}
R.~T. ROCKAFELLAR AND R.~J. WETS,
\newblock {\em Variational Analysis},
\newblock {Grundlehren der mathematischen Wissenschaften, Springer-Verlag,
  Berlin, 1998.}

\bibitem{Svaiter11}
B.~F. SVAITER,
\newblock {\em On weak convergence of the {D}ouglas--{R}achford method},
\newblock {SIAM J. Control. Optim.}, 49 (2011), pp. 280--287.

\end{thebibliography}

\section*{Acknowledgments}
We thank Jennifer Schubert of the Laser-Laboratorium G\"ottingen for providing us with the STED measurements shown in Fig.~\ref{Fig:edata}.
Thanks also to Jalal Fadili for fruitful discussions and helpful comments during the preparation of this work.

 \end{document}